\documentclass[11pt]{amsart}
\usepackage{graphicx}
\usepackage[all]{xy}
\usepackage[ansinew]{inputenc}
\usepackage{amsmath}
\usepackage{dsfont}
\usepackage{graphicx}
\usepackage{amsthm}
\usepackage{amsfonts}
\usepackage{newlfont}
\usepackage{amscd}
\usepackage{amssymb}
\usepackage{latexsym}
\usepackage{eufrak}
\usepackage{euscript}
\usepackage{mathrsfs}
\usepackage{color}
\usepackage{dsfont}
\usepackage[babel=true]{csquotes}
\usepackage[ansinew]{inputenc}
\usepackage{amsmath}
\usepackage{graphicx}
\usepackage{amsthm}
\usepackage{amsfonts}
\usepackage{newlfont}
\usepackage{amscd}
\usepackage{amssymb}
\usepackage{latexsym}
\usepackage{eufrak}
\usepackage{euscript}
\usepackage{mathrsfs}
\usepackage{color}
\usepackage{fancyhdr}
\usepackage{t1enc}
\usepackage{fancybox}
\usepackage{graphics}
\usepackage[toc,page]{appendix}
\usepackage{subfig}
\usepackage[all]{xy}
\usepackage{calligra}
\usepackage{enumitem}
\usepackage{cancel}
\usepackage{geometry}
\geometry{hmargin=2.5cm,vmargin=2.5cm}
\vfuzz2pt 
\hfuzz2pt 
\newtheorem{thm}{Theorem}[section]
\newtheorem{cor}[thm]{Corollary}
\newtheorem{lem}[thm]{Lemma}

\newtheorem{prop}[thm]{Proposition}
\theoremstyle{definition}
\newtheorem{defn}[thm]{Definition}
\newtheorem{defns}[thm]{Definitions}

\newtheorem{rem}[thm]{Remark}

\numberwithin{equation}{section}



\let \b = \beta

\let \r=\rho

\allowdisplaybreaks

\begin{document}
\title[Representations of Hom-Lie algebras] {representations of Simple Hom-Lie algebras}
\author[B. Agrebaoui, K. Benali and A. Makhlouf]{Boujemaa Agrebaoui$^1$, Karima Benali$^2$ and Abdenacer Makhlouf$^{3}$}%
\address{$^1$Université de Sfax, Faculté des Sciences de Sfax,
 Department de mathématiques, Route de la Soukra km 4 - Sfax - 3038, Tunisia}%
\email{b.agreba@fss.rnu.tn}
\address{$^2$Université de Sfax, Faculté des Sciences de Sfax,
 Department de mathématiques, Route de la Soukra km 4 - Sfax - 3038, Tunisia}%
\email{karimabenali172@yahoo.fr }
\address{Université de Haute Alsace
IRIMAS-d\'epartement de  Math\'ematiques, 
6, rue des Frères Lumière
68093 Mulhouse cedex, France}%
\email{abdenacer.makhlouf@uha.fr}

\thanks{}

\keywords{Hom-Lie algebra, representation, $\mathfrak{sl}(2)$}

\dedicatory{}
\begin{abstract}
The purpose  of this paper is to study representations of simple multiplicative Hom-Lie algebras. First, we provide a new  proof using Killing form  for  characterization theorem of simple Hom-Lie algebras given by Chen and Han,  then  discuss the representations structure of simple multiplicative Hom-Lie algebras. Moreover, we study weight modules and root space decompositions of simple multiplicative Hom-Lie algebras, characterize weight modules and provide examples of representations of  $\mathfrak{sl}_2$-type Hom-Lie algebras.
\end{abstract}
\keywords{Hom-Lie algebra, Simple  Hom-Lie algebra, Representation}

\maketitle \vspace{.20cm}
\vskip0.5cm

\date{\today}

\section{Introduction}
Nowadays, one of the most modern trends in mathematics has to do with
representations  and deformations. These two topics are
important tools in most parts of Mathematics and Physics. Hom-type algebras arised first in examples of $q$-deformations of algebras of vector fields, like Witt and Virasoro algebras, where the usual derivation is replaced by a $\sigma$-derivation. 
Motivated by these examples, Hartwig, Larsson and
Silvestrov developed from the algebraic point of view in \cite{HDS}  the deformation theory using $\sigma$-derivations and introduced a new category of algebras called Hom-Lie
algebras. A Hom-Lie algebra is a triple ($\mathfrak{g}$, $[\cdot ,\cdot ]_{\alpha}$, $\alpha$) in which the bracket satisfies a twisted Jacobi identity along the linear map $\alpha$. It should be pointed that
Lie algebras form a subclass of Hom-Lie algebras, i.e. when $\alpha$ equal to the identity map. Various classical algebraic structures where considered and generalized within similar framework  like Hom-Lie superalgebras in \cite{FA}.

 Representations  of Hom-Lie algebras were introduced and studied in \cite{YS}, see also \cite{SB&AM}. Based on this, we aim in this paper to discuss simple Hom-Lie algebras representations. Simple Hom-Lie algebras were characterized   in \cite{X.C&W.H}, where the authors showed that they are obtained by Yau twist of semisimple Lie algebras. This key observation is used here  to built a representation theory of simple Hom-Lie algebras.
Moreover examples are provided by a study of Hom-type $\mathfrak{sl}(2)$-modules. 

 The paper is organized as follows.  In Section
 2, we review  basic definitions and relevant constructions about Hom-Lie algebras. Then in Section 3 we recall some fundamental results about structure of simple multiplicative Hom-Lie algebras and provide a new proof of the main theorem using Killing form. 
Section 4 is dedicated to the construction of  multiplicative simple Hom-Lie
algebras representations, we  show that there is a correspondence between
representation of multiplicative simple Hom-Lie algebras and representation of the induced semisimple Lie
algebras using invertible
twisting maps.
In Section 5, we introduce and discuss the notion of simple multiplicative Hom-Lie algebra weight-modules.
Finally in Section 6 we study and classify Hom-$\mathfrak{sl}(2)$-modules.\\

\section{Basics}
In this section, we provide some preliminaries, basic definitions  and relevant constructions about Hom-Lie algebras and related structures. Throughout this paper all algebras and vector spaces are considered  over $\mathbb{K}$, an algebraically closed field of characteristic 0.

 \begin{defn}
A Hom-Lie algebra is a triple $(\mathfrak{g}, [\cdot ,\cdot]_{\alpha}, \alpha)$ consisting of
a vector space $\mathfrak{g}$, a bilinear map
$[\cdot ,\cdot ]_{\alpha}:\mathfrak{g}\times\mathfrak{g}\longrightarrow \mathfrak{g}$ and a linear
map $\alpha:\mathfrak{g}\rightarrow \mathfrak{g}$ that satisfies :
\begin{eqnarray*}
           & [x,y]_{\alpha}=-[y,x]_{\alpha},\;\forall x,y \in \mathfrak{g}\; \text{ (skewsymmetry)}\\
         &  \underset{x,y,z}\circlearrowleft[\alpha(x),[y,z]_{\alpha}]_{\alpha}=0, \forall x, y, z \in \mathfrak{g}\; \text{ (Hom-Jacobi identity)}.
            \end{eqnarray*}
A Hom-Lie algebra $(\mathfrak{g},[\cdot ,\cdot ]_{\alpha},\alpha)$ is said to be  \emph{multiplicative }
if $\alpha $
is an algebra morphism, i.e.
$$\alpha([x,y]_{\alpha})=[\alpha(x),\alpha(y)]_{\alpha},\forall x,y \in \mathfrak{g}.$$
It is said \emph{regular} if $\alpha$ is an algebra automorphism.
\end{defn}
\begin{defn}
Let $(\mathfrak{g},[\cdot ,\cdot ]_{\alpha},\alpha)$ be a Hom-Lie algebra. A subspace $\mathfrak{h}$ of $\mathfrak{g}$  is called
Hom-Lie \emph{subalgebra}  if $[\mathfrak{h},\mathfrak{h}]_{\alpha}\subseteq \mathfrak{h}$ and $\alpha(\mathfrak{h})\subseteq
\mathfrak{h}$. In particular, a Hom-Lie subalgebra $\mathfrak{h}$ is said to be an \emph{ideal} of
$\mathfrak{g}$ if $[\mathfrak{h},\mathfrak{g}]_{\alpha}\subseteq \mathfrak{h}$.
The Hom-Lie algebra $\mathfrak{g}$ is called  \emph{abelian}  if as usual $[x,y]=0, \forall x,y \in
\mathfrak{g}$.
\end{defn}
\begin{defn} Let $(\mathfrak{g}_{1},[\cdot ,\cdot ]_{\alpha_1},\alpha_{1})$ and $(\mathfrak{g}_{2},[\cdot ,\cdot ]_{\alpha_2},\alpha_{2})$
be two Hom-Lie algebras. A linear map $\varphi:\mathfrak{g}_{1}\longrightarrow \mathfrak{g}_{2}$ is a \emph{Hom-Lie algebra morphism}  if for all $x,y\in \mathfrak{g}_1$
$$\varphi([x,y]_{\alpha_1})=[\varphi(x),\varphi(y)]_{\alpha_2}  \text{ and } \varphi\circ \alpha_{1}=\alpha_{2}\circ \varphi .$$
In particular, they are isomorphic if $\varphi$ is a
bijective linear map.

A linear map $\varphi:\mathfrak{g}_{1}\longrightarrow \mathfrak{g}_{2}$ is said to be a \emph{weak Hom-Lie algebra morphism} if for all $x,y\in \mathfrak{g}_1$, we have only 
$\varphi([x,y]_{\alpha_1})=[\varphi(x),\varphi(y)]_{\alpha_2} $.
\end{defn}

There is a key construction introduced by D. Yau that gives rise to a Hom-Lie algebra starting from a Lie algebra and a Lie algebra homomorphism \cite{DY1}. 
\begin{prop}[Yau Twist]\label{twist}
 Let $\mathfrak{g}=(\mathfrak{g},[\cdot ,\cdot])$ be a Lie algebra and  $\alpha:\mathfrak{g} \longrightarrow \mathfrak{g}$ be a Lie algebra homomorphism. Then, $\mathfrak{g}_{\alpha}=(\mathfrak{g},[\cdot ,\cdot ]_{\alpha}:=\alpha([\cdot ,\cdot ])
 ,\alpha)$ is a Hom-Lie algebra.
\end{prop}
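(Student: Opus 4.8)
The plan is to verify directly that the triple $\mathfrak{g}_{\alpha}=(\mathfrak{g},[\cdot,\cdot]_{\alpha},\alpha)$ satisfies the two axioms of a Hom-Lie algebra, namely skewsymmetry of the new bracket and the Hom-Jacobi identity. Skewsymmetry is immediate: since $[\cdot,\cdot]$ is the Lie bracket on $\mathfrak{g}$, we have $[x,y]_{\alpha}=\alpha([x,y])=\alpha(-[y,x])=-\alpha([y,x])=-[y,x]_{\alpha}$, using linearity of $\alpha$. So the only substantive point is the Hom-Jacobi identity.

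For the Hom-Jacobi identity, I would compute one cyclic term and then sum. Expanding the definition of the twisted bracket twice gives
\begin{align*}
[\alpha(x),[y,z]_{\alpha}]_{\alpha}
&=\alpha\bigl([\alpha(x),\alpha([y,z])]\bigr).
\end{align*}
Here the outer $\alpha$ comes from the definition $[\cdot,\cdot]_{\alpha}=\alpha([\cdot,\cdot])$, and inside I have replaced $[y,z]_{\alpha}$ by $\alpha([y,z])$. The key algebraic step is that $\alpha$ is a Lie algebra homomorphism, so $[\alpha(x),\alpha([y,z])]=\alpha([x,[y,z]])$; substituting this yields
\begin{align*}
[\alpha(x),[y,z]_{\alpha}]_{\alpha}
&=\alpha\bigl(\alpha([x,[y,z]])\bigr)
=\alpha^{2}([x,[y,z]]).
\end{align*}
Summing over the cyclic permutations of $(x,y,z)$, I can factor out $\alpha^{2}$ by linearity:
\begin{align*}
\underset{x,y,z}{\circlearrowleft}[\alpha(x),[y,z]_{\alpha}]_{\alpha}
&=\alpha^{2}\Bigl(\underset{x,y,z}{\circlearrowleft}[x,[y,z]]\Bigr).
\end{align*}
The inner cyclic sum is exactly the ordinary Jacobi identity for the Lie algebra $\mathfrak{g}$, which vanishes; applying the linear map $\alpha^{2}$ to $0$ gives $0$, establishing the Hom-Jacobi identity.

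I do not expect any genuine obstacle here, since the result is the foundational Yau twist construction and the computation is short once the bookkeeping is set up correctly. The only place to be careful is tracking where each factor of $\alpha$ originates: one factor comes from the defining formula of the new bracket applied to the outer bracket, and the homomorphism property is what lets me pull the action of $\alpha$ on the inner bracket through to recover an $\alpha^{2}$ acting on the untwisted double bracket. Once those two uses of $\alpha$ are correctly accounted for, the reduction to the classical Jacobi identity is automatic. It is worth remarking that the homomorphism hypothesis on $\alpha$ is used in an essential way for the Hom-Jacobi step (not merely linearity), and that this same construction yields a \emph{multiplicative} Hom-Lie algebra precisely because $\alpha$ is assumed to be an algebra morphism.
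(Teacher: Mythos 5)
Your proof is correct and is exactly the direct verification the paper has in mind: the paper's own proof merely states that the Hom-Jacobi identity is straightforward to check, and your computation reducing $[\alpha(x),[y,z]_{\alpha}]_{\alpha}$ to $\alpha^{2}([x,[y,z]])$ via the homomorphism property, followed by the classical Jacobi identity, is the standard way to fill in that detail.
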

\begin{proof}
It is straightforward  to  prove that the new bracket  $[\cdot ,\cdot ]_{\alpha}$
satisfies the Hom-Jacobi identity.
\end{proof}
\begin{rem}
More generally, let  $(\mathfrak{g},[\cdot ,\cdot ]_{\alpha},\alpha)$ be a Hom-Lie algebra and
$\gamma:\mathfrak{g}\rightarrow \mathfrak{g}$ be a weak Hom-Lie algebra morphism. Then $\mathfrak{g}_{\gamma}=(\mathfrak{g},[\cdot ,\cdot ]_{\gamma}:=\gamma([\cdot ,\cdot ])
 ,\gamma\circ\alpha)$ is a new Hom-Lie algebra.
\end{rem}

\begin{defn}
Let $(\mathfrak{g},[\cdot ,\cdot ]_{\alpha},\alpha)$ be a Hom-Lie algebra.
If there exists a Lie algebra $(\mathfrak{g},[\cdot ,\cdot ])$ such that
$[x,y]_{\alpha}=\alpha([x,y])=[\alpha(x),\alpha(y)]$, for all $x,y \in \mathfrak{g}$, then
$(\mathfrak{g},[\cdot ,\cdot ]_{\alpha},\alpha)$ is said to be of Lie-type and $(\mathfrak{g},[\cdot ,\cdot ])$ is called
the induced Lie algebra of $(\mathfrak{g},[\cdot ,\cdot ]_{\alpha},\alpha)$.
\end{defn}
\begin{lem}
Let $(\mathfrak{g},[\cdot ,\cdot ]_{\alpha},\alpha)$ be a regular  Hom-Lie algebra. Then $(\mathfrak{g},[\cdot ,\cdot ]_{\alpha},\alpha)$ is of Lie-type with the induced Lie
algebra $(\mathfrak{g},[\cdot ,\cdot ])$, where $[x,y]=\alpha^{-1}([x,y]_{\alpha}), \forall x,y \in
\mathfrak{g}$.
\end{lem}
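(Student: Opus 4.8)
The plan is to take the candidate bracket $[x,y]:=\alpha^{-1}([x,y]_\alpha)$ prescribed in the statement and to verify directly that $(\mathfrak{g},[\cdot,\cdot])$ is an ordinary Lie algebra satisfying the two Lie-type compatibility relations. Since the Hom-Lie algebra is regular, $\alpha$ is an algebra automorphism; in particular it is bijective (so that $\alpha^{-1}$ exists and is linear) and multiplicative, meaning $\alpha([x,y]_\alpha)=[\alpha(x),\alpha(y)]_\alpha$ for all $x,y$. These two features of $\alpha$ are precisely what the argument will exploit.

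First I would dispose of the routine requirements. Bilinearity of $[\cdot,\cdot]$ is immediate from bilinearity of $[\cdot,\cdot]_\alpha$ together with linearity of $\alpha^{-1}$, and skew-symmetry follows from $[x,y]=\alpha^{-1}([x,y]_\alpha)=-\alpha^{-1}([y,x]_\alpha)=-[y,x]$. Next I would record the two compatibility relations. By construction $\alpha([x,y])=\alpha(\alpha^{-1}([x,y]_\alpha))=[x,y]_\alpha$, while multiplicativity gives $[\alpha(x),\alpha(y)]=\alpha^{-1}([\alpha(x),\alpha(y)]_\alpha)=\alpha^{-1}(\alpha([x,y]_\alpha))=[x,y]_\alpha$. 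Hence $[x,y]_\alpha=\alpha([x,y])=[\alpha(x),\alpha(y)]$, which is exactly the Lie-type condition and simultaneously shows that $\alpha$ is a Lie algebra homomorphism for the new bracket, i.e. $\alpha([x,y])=[\alpha(x),\alpha(y)]$.

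The only substantive point is the ordinary Jacobi identity for $[\cdot,\cdot]$, which I would extract from the Hom-Jacobi identity. Using the relations above repeatedly, each summand $[\alpha(x),[y,z]_\alpha]_\alpha$ can be rewritten in terms of the new bracket: since $[y,z]_\alpha=\alpha([y,z])$, since $[u,v]_\alpha=\alpha([u,v])$, and since $\alpha$ is a Lie homomorphism, one computes
\begin{align*}
[\alpha(x),[y,z]_\alpha]_\alpha
&=[\alpha(x),\alpha([y,z])]_\alpha
=\alpha\big([\alpha(x),\alpha([y,z])]\big)\\
&=\alpha\big(\alpha([x,[y,z]])\big)
=\alpha^{2}([x,[y,z]]).
\end{align*}
Summing this cyclically and invoking $\underset{x,y,z}{\circlearrowleft}[\alpha(x),[y,z]_\alpha]_\alpha=0$ yields $\alpha^{2}\big([x,[y,z]]+[y,[z,x]]+[z,[x,y]]\big)=0$.

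Finally, because $\alpha$ is an automorphism, $\alpha^{2}$ is injective, so the cyclic sum inside the parentheses must vanish, which is exactly the Jacobi identity for $[\cdot,\cdot]$. This completes the verification that $(\mathfrak{g},[\cdot,\cdot])$ is a Lie algebra and, combined with the compatibility relations of the second step, that it is the induced Lie algebra of $(\mathfrak{g},[\cdot,\cdot]_\alpha,\alpha)$. I expect the middle step to be the main obstacle: one must apply the identity $[u,v]_\alpha=\alpha([u,v])$ and the homomorphism property of $\alpha$ in the right order so that everything collapses into a single scalar-free factor $\alpha^{2}$, after which invertibility of $\alpha$ does the rest.
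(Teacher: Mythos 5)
Your proof is correct and follows essentially the same route as the paper: both derive $\alpha([x,y])=[\alpha(x),\alpha(y)]$ from multiplicativity and then reduce the Jacobi identity for the new bracket to the Hom-Jacobi identity. The only cosmetic difference is that you push $\alpha^{2}$ forward onto each summand and cancel it by injectivity, whereas the paper pulls $\alpha^{-2}$ out in front of the cyclic sum; these are the same computation read in opposite directions.
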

\begin{proof} Let $[x,y]=\alpha^{-1}([x,y]_{\alpha})$ for any $x,y\in \mathfrak{g}$.
Since $(\mathfrak{g},[\cdot ,\cdot]_{\alpha},\alpha)$ is multiplicative then,
$\alpha([x,y]_{\alpha})=[\alpha(x),\alpha(y)]_{\alpha}$.  It implies
$\alpha^{2}([x,y])=\alpha([\alpha(x),\alpha(y)])$ and thus
$\alpha([x,y])=[\alpha(x),\alpha(y)]$. In the following we shall prove that
$(\mathfrak{g},[\cdot ,\cdot ])$ is a Lie algebra. The skewsymmetry of  $[\cdot ,\cdot ]$ is obvious.
Now, we prove that it satisfies the Jacobi identity. Indeed,  let's $x,y,z \in \mathfrak{g}$, we have
\begin{eqnarray*}
\underset{x,y,z}\circlearrowleft[x,[y,z]]=\underset{x,y,z}\circlearrowleft
\alpha^{-1}[x,\alpha^{-1}[y,z]_{\alpha}]_{\alpha}
=\underset{x,y,z}\circlearrowleft \alpha^{-2}[\alpha(x),[y,z]_{\alpha}]_{\alpha}=0.
\end{eqnarray*}
It follows that $(\mathfrak{g},[\cdot ,\cdot ])$ is a Lie algebra.
\end{proof}

The concept of representation of a Hom-Lie algebra was introduced in \cite{YS}, see also \cite{SB&AM}.
\begin{defn}\label{sheng}
A representation of a multiplicative Hom-Lie algebra $(\mathfrak{g}, [\cdot ,\cdot ]_{\alpha}, \alpha)$
on the vector space V with respect to $\b \in End(V)$ is a linear map
$\rho_{\b}:\mathfrak{g}\longrightarrow End(V)$, such that for any $x,y \in \mathfrak{g},$ the following conditions are satisfied:
\begin{eqnarray}\label{ss}
  &  \rho_{\b}(\alpha(x))\circ\b=\b\circ\rho_{\b}(x),\\
\label{sss}&
    \rho_{\b}([x,y]_{\alpha})\circ\b=\rho_{\b}(\alpha(x))\rho_{\b}(y)
          -\rho_{\b}(\alpha(y))\rho_{\b}(x).
\end{eqnarray}
Hence $(V,\rho_{\beta},\beta)$ is called a $\mathfrak{g}$-module via the action
 $x.v=\rho_{\beta}(x)v, \forall x\in \mathfrak{g}, v\in V$.
\end{defn}
We have the following property.
\begin{prop}\label{alphabeta}
Let $(V,\rho_{\beta},\beta)$ be a representation of a simple multiplicative Hom-Lie algebra
$(\mathfrak{g},[\cdot,\cdot],\alpha)$ with $\b$  invertible. Then, $\forall n\in\mathbb{N}$ we have,
\begin{enumerate}
\item[(1)] $\rho_{\beta}(\alpha^{n}(x))=\beta^{n}\rho_{\beta}(x)\beta^{-n}.$
\item[(2)] $\rho_{\beta}(\alpha^{n}[x,y])\beta=\rho_{\beta}(\alpha^{n+1}(x))
          \rho_{\beta}(\alpha^{n}(y))
          -\rho_{\beta}(\alpha^{n+1}(y))\rho_{\beta}(\alpha^{n}(x)).$
\end{enumerate}
\end{prop}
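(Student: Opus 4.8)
The plan is to derive both identities directly from the two axioms \eqref{ss} and \eqref{sss} of Definition \ref{sheng}, together with the multiplicativity of $\alpha$ and the invertibility of $\beta$; note that simplicity of $\mathfrak{g}$ plays no role in the argument. Throughout I read $[\cdot,\cdot]$ as the Hom-Lie bracket $[\cdot,\cdot]_\alpha$, so that item (2) at $n=0$ is precisely \eqref{sss} — this is the one reading for which the stated exponents match, since interpreting $[\cdot,\cdot]$ as an induced Lie bracket would shift the left-hand side by one power of $\alpha$.

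First I would prove (1) by induction on $n$, the case $n=0$ being trivial. For $n=1$, equation \eqref{ss} reads $\rho_\beta(\alpha(x))\circ\beta=\beta\circ\rho_\beta(x)$, and since $\beta$ is invertible, right multiplication by $\beta^{-1}$ gives $\rho_\beta(\alpha(x))=\beta\,\rho_\beta(x)\,\beta^{-1}$. For the inductive step I would apply this last relation to the element $\alpha^n(x)$, using $\alpha^{n+1}(x)=\alpha(\alpha^n(x))$, to obtain $\rho_\beta(\alpha^{n+1}(x))=\beta\,\rho_\beta(\alpha^n(x))\,\beta^{-1}$; inserting the induction hypothesis $\rho_\beta(\alpha^n(x))=\beta^n\rho_\beta(x)\beta^{-n}$ and letting the powers of $\beta$ telescope yields $\beta^{n+1}\rho_\beta(x)\beta^{-(n+1)}$, which closes the induction.

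For (2) the key observation is that the identity is simply \eqref{sss} evaluated at the twisted arguments $\alpha^n(x)$ and $\alpha^n(y)$. I would first record, by an easy induction on $n$ from the multiplicativity relation $\alpha([x,y]_\alpha)=[\alpha(x),\alpha(y)]_\alpha$, that $[\alpha^n(x),\alpha^n(y)]_\alpha=\alpha^n([x,y]_\alpha)$ for all $n$. Replacing $x,y$ by $\alpha^n(x),\alpha^n(y)$ in \eqref{sss} then gives
\begin{equation*}
\rho_\beta\big(\alpha^n([x,y]_\alpha)\big)\circ\beta=\rho_\beta(\alpha^{n+1}(x))\rho_\beta(\alpha^n(y))-\rho_\beta(\alpha^{n+1}(y))\rho_\beta(\alpha^n(x)),
\end{equation*}
which is the asserted identity once $\alpha^n([x,y]_\alpha)$ is abbreviated to $\alpha^n[x,y]$.

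I do not anticipate a genuine obstacle: both parts collapse to the defining axioms after elementary manipulations. The only points requiring care are the exponent bookkeeping in the telescoping step of (1) and the preliminary lemma $\alpha^n([x,y]_\alpha)=[\alpha^n(x),\alpha^n(y)]_\alpha$ used in (2). As a consistency cross-check, (2) can instead be deduced from (1): rewriting each factor on the right-hand side via (1) and pulling the surviving inner $\beta^{-1}$ outside with the help of $\rho_\beta(x)\beta^{-1}=\beta^{-1}\rho_\beta(\alpha(x))$ (a restatement of \eqref{ss}) reduces the right-hand side to $\beta^n\big(\rho_\beta(\alpha(x))\rho_\beta(y)-\rho_\beta(\alpha(y))\rho_\beta(x)\big)\beta^{-n}$, and one finishes by applying \eqref{sss}.
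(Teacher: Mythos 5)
Your proof is correct and follows essentially the same route as the paper, whose entire proof is the one-line remark that (1) follows by induction from \eqref{ss} and (2) by induction from \eqref{sss}; you simply supply the details (including the auxiliary fact $\alpha^n([x,y]_\alpha)=[\alpha^n(x),\alpha^n(y)]_\alpha$ and the clarification of which bracket is meant), which the paper leaves implicit.
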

\begin{proof}
$(1)$ is straightforward  by induction using (\ref{ss}) and 
$(2)$ is proved by induction using (\ref{sss}).
\end{proof}
\begin{defn}
For a $\mathfrak{g}$-module $(V,\r_{\b},\b)$, if a subspace $V_{1}\subseteq V$ is invariant
under $\rho_{\beta}(x), \forall x\in\mathfrak{g}$ and under $\b$ then $(V_{1},\r_{\b},\b\vert_{V_{1}})$ is called a $\mathfrak{g}$-submodule of $(V,\r_{\b},\b)$.\\
A $\mathfrak{g}$-module $(V,\r_{\b},\b)$ is called irreducible, if it has precisely two
$\mathfrak{g}$-submodules (itself and 0) and it is
called completely reducible if $V=V_{1}\oplus ...\oplus V_{s}$, where  $(V_{i},\r_{\b},\b\vert_{V_{i}})$ are irreducible $\mathfrak{g}$-submodules.
\end{defn}
\begin{thm}[\cite{XL}]\label{Keythm}
Let $(\mathfrak{g},[\cdot ,\cdot ]_{\alpha},\alpha)$ be a Lie-type Hom-Lie algebra  with
 $(\mathfrak{g},[\cdot ,\cdot ])$ the induced Lie algebra.
\begin{enumerate}
  \item Let $(V,\rho_{\b},\b)$ be  a representation
of the Hom-Lie algebra where $\b$ is invertible.
Then  $(V,\rho)=(V,\b^{-1}\circ\rho_{\b})$ is a representation of the Lie algebra $(\mathfrak{g},[\cdot ,\cdot ])$.
  \item Suppose that $(V,\rho)$ is a representation of the Lie algebra $(\mathfrak{g},[\cdot,\cdot])$.
  If there exists $ \b \in End(V)$ such that
  \begin{equation}\label{char}
\b\circ\rho(x)=\rho(\alpha(x))\circ\b, \forall x \in \mathfrak{g},\forall  v\in V,
      \end{equation}
  then $(V,\rho_{\b}=\b\circ\rho,\b)$ is a representation of the Hom-Lie algebra  $(\mathfrak{g},[\cdot ,\cdot ]_{\alpha},\alpha)$.
\end{enumerate}
\end{thm}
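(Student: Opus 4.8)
The plan is to observe that both implications hinge on a single intertwining relation between $\rho$ and $\beta$, namely $\beta\circ\rho(x)=\rho(\alpha(x))\circ\beta$, which is precisely condition (\ref{char}). In part (1) this relation will be forced out of (\ref{ss}), whereas in part (2) it is handed to us as a hypothesis; once it is available, the Lie-type identity $[x,y]_{\alpha}=\alpha([x,y])$ is exactly what is needed to convert the twisted condition (\ref{sss}) into the ordinary bracket relation, and conversely. So I would organize the whole proof around extracting (\ref{char}) and then feeding it into (\ref{sss}).

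For part (1), I would first write $\rho_{\b}=\beta\circ\rho$ and insert this into (\ref{ss}): the equation $\rho_{\b}(\alpha(x))\beta=\beta\rho_{\b}(x)$ becomes $\beta\rho(\alpha(x))\beta=\beta^{2}\rho(x)$, and cancelling the invertible $\beta$ on the left yields $\rho(\alpha(x))\beta=\beta\rho(x)$, i.e. the intertwining relation. Next I would substitute $\rho_{\b}=\beta\circ\rho$ together with $[x,y]_{\alpha}=\alpha([x,y])$ into (\ref{sss}). The left-hand side becomes $\beta\rho(\alpha([x,y]))\beta$, which by the intertwining relation equals $\beta^{2}\rho([x,y])$; the right-hand side, after pushing each inner $\beta$ through via the same relation, collapses to $\beta^{2}\bigl(\rho(x)\rho(y)-\rho(y)\rho(x)\bigr)$. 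Equating and cancelling $\beta^{2}$ — and here the invertibility of $\beta$ is used decisively — gives the Lie bracket identity $\rho([x,y])=\rho(x)\rho(y)-\rho(y)\rho(x)$.

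For part (2), the verification runs in the opposite direction and, notably, does not require $\beta$ to be invertible. With $\rho_{\b}=\beta\circ\rho$ and the hypothesis $\rho(\alpha(x))\beta=\beta\rho(x)$ in hand, condition (\ref{ss}) reduces to the tautology $\beta^{2}\rho(x)=\beta^{2}\rho(x)$, and condition (\ref{sss}) follows by expanding both sides: the left side becomes $\beta^{2}\rho([x,y])$ after applying the Lie-type identity and the intertwining relation, the right side becomes $\beta^{2}\bigl(\rho(x)\rho(y)-\rho(y)\rho(x)\bigr)$ after moving the inner $\beta$ across each factor, and the two coincide precisely because $\rho$ is a representation of the Lie algebra $(\mathfrak{g},[\cdot,\cdot])$.

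The computations are entirely mechanical, so I do not expect a genuine obstacle; the only real care needed is bookkeeping of the order of composition and tracking exactly where invertibility of $\beta$ is essential (in part (1), to cancel $\beta^{2}$) and where it is not (in part (2)). The one conceptual point worth stating explicitly is that the whole theorem amounts to the assertion that (\ref{char}) is the single compatibility condition linking the two notions of representation, and it is this that makes both directions short.
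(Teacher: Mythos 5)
Your argument is correct and complete: deriving the intertwining relation (\ref{char}) from (\ref{ss}) in part (1), then pushing $\beta$ through both sides of (\ref{sss}) using the Lie-type identity $[x,y]_{\alpha}=\alpha([x,y])$, is exactly the computation needed, and you correctly identify that invertibility of $\beta$ is used only to cancel $\beta^{2}$ in part (1) and is not needed in part (2). The paper itself gives no proof of this theorem --- it is quoted from the reference \cite{XL} --- so there is nothing to contrast with; your verification is the standard one and can stand as written.
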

\begin{defn}
Let $(\mathfrak{g},[\cdot ,\cdot ]_{\alpha},\alpha)$ be a Lie-type Hom-Lie algebra  with
 $(\mathfrak{g},[\cdot ,\cdot ])$ the induced Lie algebra.
A representation $(V,\r_{\b},\b)$ of  $(\mathfrak{g},[\cdot ,\cdot ]_{\alpha},\alpha)$ is called of Lie-type
if $\r_{\b}=\b\circ\r$ where $\r$ is the representation of the induced Lie algebra $(\mathfrak{g},[\cdot,\cdot])$.
It is called regular if the representation
$(V,\r:=\b^{-1}\circ\r_{\b})$ of the induced Lie algebra $(\mathfrak{g},[\cdot,\cdot])$
is irreducible.
\end{defn}
The previous theorem provides a  relationship between representations
of Lie-type Hom-Lie algebras   and those of their induced Lie algebras.

\section{Structure of Simple Multiplicative Hom-Lie algebras}
In \cite{X.C&W.H}, the authors have proved that multiplicative simple Hom-Lie algebras are of Lie-type and their induced Lie algebras are semisimple. Moreover they discussed the dimension problem and showed that  there is an $n$-dimensional simple Hom-Lie algebra
for any integer $n$ larger than 2.  We should mention also the following  relevant references  \cite{SB&AM},\cite{XL} and \cite{DY3}.
\begin{defn}
A Hom-Lie algebra $(\mathfrak{g},[\cdot ,\cdot ]_{\alpha},\alpha)$  is
called  \emph{simple}  if it  has no proper ideals
and $[\mathfrak{g},\mathfrak{g}]_{\alpha}=\mathfrak{g}$. It  is called
 \emph{semisimple} Hom-Lie algebra if $\mathfrak{g}$ is a direct sum of certain simple ideals.
\end{defn}
We have the following  two propositions.
\begin{prop}[\cite{SB&AM}]
Let $(\mathfrak{g},[\cdot ,\cdot ])$ be a simple Lie algebra and let $\alpha \in Aut(\mathfrak{g})$. \\ Then,
$\mathfrak{g}_{\alpha}=(\mathfrak{g},\alpha([\cdot ,\cdot ]),\alpha)$ is a simple Hom-Lie algebra.
\end{prop}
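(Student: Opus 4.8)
The plan is to reduce everything to the structure of the underlying simple Lie algebra $(\mathfrak{g},[\cdot,\cdot])$ and to exploit that $\alpha$ is bijective. First I would observe that $\mathfrak{g}_{\alpha}$ is indeed a Hom-Lie algebra: since an automorphism is in particular a Lie algebra homomorphism, Proposition \ref{twist} applies and shows that $(\mathfrak{g},\alpha([\cdot,\cdot]),\alpha)$ satisfies skewsymmetry and the Hom-Jacobi identity; moreover $\alpha\in \mathrm{Aut}(\mathfrak{g})$ makes it regular, hence multiplicative. So only the two simplicity conditions remain to be verified.

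Next I would check that $[\mathfrak{g},\mathfrak{g}]_{\alpha}=\mathfrak{g}$. By definition $[\mathfrak{g},\mathfrak{g}]_{\alpha}=\alpha([\mathfrak{g},\mathfrak{g}])$. Since $\mathfrak{g}$ is a simple Lie algebra we have $[\mathfrak{g},\mathfrak{g}]=\mathfrak{g}$, and as $\alpha$ is surjective this gives $\alpha([\mathfrak{g},\mathfrak{g}])=\alpha(\mathfrak{g})=\mathfrak{g}$, as required.

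The heart of the argument is showing that $\mathfrak{g}_{\alpha}$ has no proper ideals, and the key is a translation between Hom-ideals of $\mathfrak{g}_{\alpha}$ and ideals of $\mathfrak{g}$. Let $I$ be an ideal of $\mathfrak{g}_{\alpha}$. By the definition of a Hom-Lie ideal this means $\alpha(I)\subseteq I$ and $[I,\mathfrak{g}]_{\alpha}=\alpha([I,\mathfrak{g}])\subseteq I$. Because $\alpha$ is an automorphism of the finite-dimensional space $\mathfrak{g}$, the inclusion $\alpha(I)\subseteq I$ upgrades to the equality $\alpha(I)=I$ (an injective endomorphism of the finite-dimensional $I$ is onto), whence $\alpha^{-1}(I)=I$ as well. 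Applying $\alpha^{-1}$ to $\alpha([I,\mathfrak{g}])\subseteq I$ then yields $[I,\mathfrak{g}]\subseteq \alpha^{-1}(I)=I$; that is, $I$ is an ordinary ideal of the simple Lie algebra $\mathfrak{g}$.

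I would conclude by invoking simplicity of $\mathfrak{g}$: its only ideals are $0$ and $\mathfrak{g}$, both $\alpha$-invariant, so $I\in\{0,\mathfrak{g}\}$ and $\mathfrak{g}_{\alpha}$ has no proper ideals. Combined with the previous step this shows $\mathfrak{g}_{\alpha}$ is simple. The one step that needs genuine care, rather than being purely formal, is passing from the twisted condition $\alpha([I,\mathfrak{g}])\subseteq I$ back to $[I,\mathfrak{g}]\subseteq I$; this is precisely where invertibility of $\alpha$ enters, and it explains why the hypothesis $\alpha\in\mathrm{Aut}(\mathfrak{g})$ --- rather than merely a homomorphism --- is essential.
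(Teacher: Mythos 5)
The paper states this proposition without proof, attributing it to \cite{SB&AM}, so there is nothing in the text to compare against; your argument supplies the missing proof and is correct. The decisive step --- converting a Hom-ideal $I$ of $\mathfrak{g}_{\alpha}$ (so $\alpha(I)\subseteq I$ and $\alpha([I,\mathfrak{g}])\subseteq I$) into an ordinary ideal of the simple Lie algebra $(\mathfrak{g},[\cdot,\cdot])$ by applying $\alpha^{-1}$ --- is exactly the right reduction, and you correctly identify it as the place where $\alpha\in Aut(\mathfrak{g})$ is indispensable, the other two steps (Yau twist for the Hom-Lie axioms, surjectivity of $\alpha$ for $[\mathfrak{g},\mathfrak{g}]_{\alpha}=\alpha(\mathfrak{g})=\mathfrak{g}$) being routine. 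The only detail worth making explicit is that your upgrade of $\alpha(I)\subseteq I$ to $\alpha(I)=I$ uses finite-dimensionality of $I$; this is harmless in the setting of this paper, whose simple Lie algebras are finite-dimensional (the Killing-form and Cartan-subalgebra arguments of Sections 3--5 presuppose it), but the assumption should be acknowledged where it is used.
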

\begin{prop}\label{morsimple}
Simple multiplicative Hom-Lie algebras are regular Hom-Lie algebras.
\end{prop}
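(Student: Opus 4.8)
The plan is to prove that the twisting map $\alpha$ is bijective; since multiplicativity already makes $\alpha$ an algebra morphism, bijectivity is exactly what is needed to conclude that $\alpha\in\mathrm{Aut}(\mathfrak{g})$ and hence that the Hom-Lie algebra is regular.

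The first step is to observe that $\ker\alpha$ is an ideal. It is $\alpha$-invariant since $\alpha(\ker\alpha)=\{0\}\subseteq\ker\alpha$, and for $x\in\ker\alpha$, $y\in\mathfrak{g}$ multiplicativity yields $\alpha([x,y]_{\alpha})=[\alpha(x),\alpha(y)]_{\alpha}=[0,\alpha(y)]_{\alpha}=0$, so that $[x,y]_{\alpha}\in\ker\alpha$. Thus $\ker\alpha$ is a Hom-Lie subalgebra satisfying $[\ker\alpha,\mathfrak{g}]_{\alpha}\subseteq\ker\alpha$, i.e. an ideal. Simplicity then forces $\ker\alpha=0$ or $\ker\alpha=\mathfrak{g}$, so $\alpha$ is either injective or identically zero. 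Once injectivity is secured, finite-dimensionality of $\mathfrak{g}$ makes the injective endomorphism $\alpha$ bijective, and a bijective algebra morphism is an automorphism, which is regularity.

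The decisive step, and the one I expect to be the real obstacle, is ruling out the degenerate alternative $\alpha=0$. This cannot be achieved from skew-symmetry and the Hom-Jacobi identity alone: when $\alpha=0$ the Hom-Jacobi identity is vacuous, and any simple anticommutative algebra (for instance $\mathfrak{sl}(2)$ equipped with its ordinary bracket, together with $\alpha=0$) satisfies every clause of the definition of a simple multiplicative Hom-Lie algebra while failing to be regular. To exclude this case I would invoke the structural result of Chen and Han recalled above, namely that a simple multiplicative Hom-Lie algebra is of Lie-type, so that $[x,y]_{\alpha}=\alpha([x,y])$ for the induced bracket. If $\alpha=0$ held, this identity would give $[\mathfrak{g},\mathfrak{g}]_{\alpha}=\alpha([\mathfrak{g},\mathfrak{g}])=0$, contradicting the requirement $[\mathfrak{g},\mathfrak{g}]_{\alpha}=\mathfrak{g}\neq 0$ built into the definition of simplicity. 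Hence $\ker\alpha=0$, $\alpha$ is injective, and the argument concludes as in the previous paragraph.

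I would therefore organise the write-up so that the only genuinely nontrivial ingredient, the exclusion of $\alpha=0$, is isolated and explicitly attributed to the Lie-type description; the computation that $\ker\alpha$ is an ideal and the passage from injectivity to bijectivity in finite dimension are then entirely routine. If one wished for a self-contained argument avoiding the Lie-type theorem, one would instead be forced to add a non-degeneracy hypothesis on $\alpha$, since the example above shows that simplicity of the bracket by itself does not guarantee invertibility of the twisting map.
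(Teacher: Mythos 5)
Your computation that $\ker\alpha$ is an ideal is exactly the paper's argument, and your observation that simplicity only forces $\ker\alpha\in\{0,\mathfrak{g}\}$ --- together with the example $(\mathfrak{sl}(2),[\cdot,\cdot],\alpha=0)$, which does satisfy every axiom in the paper's definitions --- is a sharper reading of the situation than the paper itself offers: the paper's proof simply declares $\ker\alpha$ to be a ``non trivial ideal'' whenever it is nonzero, tacitly passing over the possibility $\ker\alpha=\mathfrak{g}$. The difficulty lies in your proposed repair. The statement that a simple multiplicative Hom-Lie algebra is of Lie-type with $[x,y]_{\alpha}=\alpha([x,y])$ is not available at this stage: the induced bracket is \emph{defined} as $[x,y]:=\alpha^{-1}([x,y]_{\alpha})$, so the Lie-type description presupposes invertibility of $\alpha$, and both in this paper and in Chen--Han it is deduced \emph{from} the present proposition (which is then fed into the Killing-form argument of Theorem \ref{simplethm}). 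Invoking it to exclude $\alpha=0$ is therefore circular: you would be using ``$\alpha$ is invertible'' to prove ``$\alpha$ is invertible.''

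The honest resolution is the one you only reach in your closing sentence: the case $\alpha=0$ cannot be eliminated from the stated axioms (Hom-Jacobi and multiplicativity are vacuous there), so nondegeneracy of $\alpha$ must be imposed, either as a standing convention or as an explicit hypothesis. Once $\alpha\neq 0$ is granted, your argument is complete and coincides with the paper's: $\ker\alpha$ is an ideal which is neither $\mathfrak{g}$ (since $\alpha\neq 0$) nor, by simplicity, a proper nonzero ideal, hence is zero; injectivity then gives bijectivity in finite dimension --- a further hypothesis the paper leaves implicit but genuinely needs, since an injective endomorphism of an infinite-dimensional space need not be onto. If you write this up, state the assumptions $\alpha\neq 0$ and $\dim\mathfrak{g}<\infty$ explicitly rather than routing the exclusion of the degenerate case through the Lie-type theorem.
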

\begin{proof}Let
$(\mathfrak{g},[\cdot ,\cdot ]_{\alpha},\alpha)$  be a simple  Hom-Lie algebra, then
$[\mathfrak{g},\mathfrak{g}]_{\alpha}=\mathfrak{g}$.
Suppose that  $ker(\alpha)\neq{0}$.  Then, $\alpha(ker(\alpha))=0$  and
$\alpha([ker(\alpha),\mathfrak{g}]_{\alpha})=0$.  So $ker(\alpha)$ is a non trivial ideal of
$(\mathfrak{g},[\cdot ,\cdot ]_{\alpha},\alpha)$. This contradicts  the simplicity of
$(\mathfrak{g},[\cdot ,\cdot ]_{\alpha},\alpha)$, except when $ker(\alpha)=0$.
Hence,  $\alpha$ is an automorphism.
\end{proof}
The following theorem summarizes results given in \cite{X.C&W.H} about  a characterization of simple multiplicative Hom-Lie algebras. We provide a new and different proof based on Killing form.
\begin{thm}(\cite{X.C&W.H})\label{simplethm}
Let $(\mathfrak{g},[\cdot ,\cdot ]_{\alpha},\alpha)$ be a simple multiplicative Hom-Lie algebra. Then the induced
Lie algebra $(\mathfrak{g},[\cdot ,\cdot ]=\alpha^{-1}([\cdot ,\cdot ]_{\alpha}))$ is semisimple
and its $n$ simple ideals are isomorphic mutually
 besides $\alpha$ acts simply transitively on simple ideals of $\mathfrak{g}$. Furthermore
$\mathfrak{g}$ can be generated by a simple ideal $\mathfrak{g}_{1}$ of the Lie algebra
$(\mathfrak{g},[\cdot ,\cdot ])$ and   $\alpha\in Aut(\mathfrak{g})$.
Taking $\alpha, \gamma \in Aut(\mathfrak{g})$ such that
$\alpha^{n}$ and $\gamma^{n}$ leaves each simple ideal invariant and
$\alpha^{n}(\mathfrak{g}_{1})=\mathfrak{g}_{1}$ (or
$\gamma^{n}(\mathfrak{g}_{1})=\mathfrak{g}_{1}$). Then we have
\begin{enumerate}
\item $\mathfrak{g}=\mathfrak{g}_{1}\oplus\alpha(\mathfrak{g}_{1})\oplus\alpha^{2}
(\mathfrak{g}_{1})\oplus...\oplus\alpha^{n-1}(\mathfrak{g}_{1})$
\Big(or $\mathfrak{g}=\mathfrak{g}_{1}\oplus\gamma(\mathfrak{g}_{1})\oplus\gamma^{2}
(\mathfrak{g}_{1})\oplus...\oplus\gamma^{n-1}(\mathfrak{g}_{1})$\Big).
\item $\alpha$ and $\gamma$ are conjugate on $\mathfrak{g}$ $\Leftrightarrow \alpha^{n}$ and $\gamma^{n}$ are conjugate on $\mathfrak{g}_{1}$.
    \end{enumerate}
\end{thm}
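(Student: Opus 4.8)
The plan is to transfer the whole problem to the induced Lie algebra and to isolate one structural observation that does all the work: an $\alpha$-invariant Lie ideal is automatically a Hom-Lie ideal. First I would invoke Proposition \ref{morsimple} to conclude that $\alpha$ is an automorphism, so that the induced bracket $[x,y]=\alpha^{-1}([x,y]_{\alpha})$ is defined and $\alpha$ is a Lie-algebra automorphism of $(\mathfrak{g},[\cdot,\cdot])$. The key lemma I would record is this: if $I$ is a Lie ideal with $\alpha(I)\subseteq I$, then $[I,\mathfrak{g}]_{\alpha}=\alpha([I,\mathfrak{g}])\subseteq\alpha(I)\subseteq I$, so $I$ is a Hom-Lie ideal; hence by simplicity of $(\mathfrak{g},[\cdot,\cdot]_{\alpha},\alpha)$ such an $I$ must be $0$ or $\mathfrak{g}$.

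For the semisimplicity of the induced Lie algebra (this is where the Killing form enters, as advertised) I would consider the Killing form $\kappa$ of $(\mathfrak{g},[\cdot,\cdot])$. Since $\alpha$ is a Lie automorphism, the relation $\mathrm{ad}\,\alpha(x)=\alpha\,\mathrm{ad}(x)\,\alpha^{-1}$ gives $\kappa(\alpha x,\alpha y)=\kappa(x,y)$, so the radical $\mathfrak{r}=\{x:\kappa(x,\mathfrak{g})=0\}$ is an $\alpha$-invariant Lie ideal, hence a Hom-Lie ideal by the lemma above. Therefore $\mathfrak{r}=0$ or $\mathfrak{r}=\mathfrak{g}$. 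The case $\mathfrak{r}=\mathfrak{g}$ is excluded because it forces $\kappa\equiv 0$, whence solvability by Cartan's criterion, contradicting $[\mathfrak{g},\mathfrak{g}]=\alpha^{-1}([\mathfrak{g},\mathfrak{g}]_{\alpha})=\mathfrak{g}$. Thus $\kappa$ is nondegenerate and $(\mathfrak{g},[\cdot,\cdot])$ is semisimple.

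Writing $\mathfrak{g}=\mathfrak{g}_{1}\oplus\cdots\oplus\mathfrak{g}_{n}$ as the sum of its simple ideals, I would note that $\alpha$ permutes these minimal ideals, inducing a permutation $\sigma$ of $\{1,\dots,n\}$. The sum of the members of any single $\langle\sigma\rangle$-orbit is an $\alpha$-invariant Lie ideal, so by the lemma simplicity forces $\sigma$ to have exactly one orbit, i.e. $\sigma$ is an $n$-cycle; this is precisely the assertion that $\alpha$ acts simply transitively. Relabelling so that $\mathfrak{g}_{i+1}=\alpha(\mathfrak{g}_{i})$ gives $\mathfrak{g}_{i}=\alpha^{i-1}(\mathfrak{g}_{1})$, which is the decomposition in (1), and the isomorphisms $\alpha^{i-1}\colon\mathfrak{g}_{1}\to\mathfrak{g}_{i}$ show that all simple ideals are mutually isomorphic; the same argument applies verbatim to $\gamma$.

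The delicate part I expect is the equivalence in (2). For the forward implication, if $\alpha=\phi\gamma\phi^{-1}$ with $\phi\in\mathrm{Aut}(\mathfrak{g})$ then $\alpha^{n}=\phi\gamma^{n}\phi^{-1}$; since $\phi$ permutes the simple ideals and $\alpha^{n}(\mathfrak{g}_{1})=\mathfrak{g}_{1}$, I would track the ideal $\mathfrak{g}_{j}=\phi^{-1}(\mathfrak{g}_{1})$ and compose with the isomorphism $\gamma^{k}\colon\mathfrak{g}_{1}\to\mathfrak{g}_{j}$ (which conjugates $\gamma^{n}|_{\mathfrak{g}_{1}}$ to $\gamma^{n}|_{\mathfrak{g}_{j}}$, as $\gamma^{n}$ and $\gamma^{k}$ commute) to obtain $\phi\gamma^{k}\in\mathrm{Aut}(\mathfrak{g}_{1})$ conjugating $\gamma^{n}|_{\mathfrak{g}_{1}}$ to $\alpha^{n}|_{\mathfrak{g}_{1}}$. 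For the converse, given $\psi\in\mathrm{Aut}(\mathfrak{g}_{1})$ with $\alpha^{n}|_{\mathfrak{g}_{1}}=\psi\,(\gamma^{n}|_{\mathfrak{g}_{1}})\,\psi^{-1}$, I would define $\phi$ on the $\gamma$-decomposition by $\phi(\gamma^{i}(x))=\alpha^{i}(\psi(x))$ for $x\in\mathfrak{g}_{1}$ and $0\le i\le n-1$; the conjugacy relation on $\mathfrak{g}_{1}$ is exactly what makes this consistent at the wrap-around index $i=n$, and since distinct simple ideals commute, $\phi$ is a Lie automorphism satisfying $\phi\gamma=\alpha\phi$. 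The main obstacle is this bookkeeping of the permutation together with the well-definedness check at the index $n$, where the chosen conjugacy on $\mathfrak{g}_{1}$ must reconcile the two cyclic structures.
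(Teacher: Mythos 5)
Your argument is correct and, for the parts the paper actually proves, follows essentially the same route: non-degeneracy of the Killing form of the induced Lie algebra via the observation that its kernel is an $\alpha$-invariant Lie ideal and hence a Hom-Lie ideal, followed by the orbit of a simple ideal under $\alpha$ giving the decomposition in (1). Where you go beyond the paper: you isolate the key lemma (an $\alpha$-invariant Lie ideal is automatically a Hom-Lie ideal) explicitly; you rule out the degenerate case $\mathfrak{k}=\mathfrak{g}$ via Cartan's criterion and $[\mathfrak{g},\mathfrak{g}]=\mathfrak{g}$, a case the paper's proof passes over silently; and you supply a proof of the conjugacy equivalence (2), which the paper's proof omits entirely (it stops after establishing (1)). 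Your forward direction (transporting $\phi^{-1}(\mathfrak{g}_1)$ back to $\mathfrak{g}_1$ by a power of $\gamma$, which commutes with $\gamma^{n}$) and your converse (defining $\phi(\gamma^{i}(x))=\alpha^{i}(\psi(x))$ and checking consistency at the wrap-around index using $\psi\gamma^{n}=\alpha^{n}\psi$ on $\mathfrak{g}_1$) are both sound; the only point worth making explicit is that $\phi$ respects brackets because distinct simple ideals commute, which you do note.
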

\begin{proof}
The Killing form $K:\mathfrak{g}\times \mathfrak{g}\rightarrow \mathbb{C}$ of $(\mathfrak{g},[\cdot,\cdot])$ is non-degenerate. In fact, let $\mathfrak{k}=\{x\in
\mathfrak{g}/K(x,y)=0,~\forall y\in \mathfrak{g}\} $ its kernel. It is clear that
$\mathfrak{k}$ is an ideal of $\mathfrak{g}$, since $K([x,y],z)=K(x,[y,z])=0,
~\forall x\in \mathfrak{k}, y,z\in \mathfrak{g}$. Since $\alpha$ is an automorphism and
$K(\alpha(x),\alpha(y))=K(x, y)$. Then $K(\alpha(x),y)=K(x, \alpha^{-1}(y))=0,~\forall
x\in \mathfrak{k}, y\in \mathfrak{g}$ and $\alpha(\mathfrak{k})\subset \mathfrak{k}$.
Then $\mathfrak{k}$ is an ideal of the multiplicative simple Hom-Lie algebra $\mathfrak{g}$
and then $\mathfrak{k}= 0$ and $K$ is non-degenerate. We deduce that $(\mathfrak{g},[\cdot,\cdot])$
is a semisimple Lie algebra and then a direct sum of its simple ideals.
Let $\mathfrak{g}_{1}$ be a minimal proper ideal of the induced Lie algebra $(\mathfrak{g},[\cdot,\cdot])$. In particular $\mathfrak{g}_{1}$ is a simple ideal. Let $n$ be the minimal integer such that $\alpha^{n-1}(\mathfrak{g}_1)\neq \mathfrak{g}_1$ and $\alpha^n(\mathfrak{g}_{1})= \mathfrak{g}_{1}$. The algebra $\mathfrak{b}=\mathfrak{g}_{1}\oplus\alpha(\mathfrak{g}_{1})\oplus\ldots\oplus
\alpha^{n-1}(\mathfrak{g}_{1})$ is an ideal of the simple multiplicative Hom-Lie algebra $(\mathfrak{g},[.,.]_\alpha,\alpha)$. Then $\mathfrak{g}=\mathfrak{b}=\mathfrak{g}_{1}\oplus\alpha(\mathfrak{g}_{1})\oplus\ldots
\oplus\alpha^{n-1}(\mathfrak{g}_{1}).$
\end{proof}
\begin{defn}
A multiplicative Hom-Lie algebra is called  semisimple if its induced Lie algebra is semisimple.
\end{defn}
\begin{prop}
A multiplicative semisimple Hom-Lie algebra is a direct sum of multiplicative simple Hom-Lie algebras.
\end{prop}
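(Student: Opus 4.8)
The plan is to reduce everything to the structure theory of the induced semisimple Lie algebra together with the characterization of simple Hom-Lie algebras in Theorem \ref{simplethm}. Write $L=(\mathfrak{g},[\cdot,\cdot])$ for the induced Lie algebra, which is semisimple by hypothesis; as in the regular/Lie-type framework (cf. Proposition \ref{morsimple} and the induced-Lie-algebra lemma) the twisting map satisfies $\alpha([x,y])=[\alpha(x),\alpha(y)]$ with $\alpha$ invertible, so $\alpha\in\mathrm{Aut}(L)$. First I would invoke the classical decomposition $L=L_{1}\oplus L_{2}\oplus\cdots\oplus L_{k}$ into simple ideals $L_{i}$. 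This decomposition is unique, and every automorphism of a semisimple Lie algebra permutes its simple ideals; hence $\alpha$ induces a permutation $\sigma$ of $\{1,\ldots,k\}$ through $\alpha(L_{i})=L_{\sigma(i)}$.

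Next I would decompose $\sigma$ into disjoint cycles, i.e. partition $\{1,\ldots,k\}$ into the orbits $O_{1},\ldots,O_{m}$ of $\sigma$, and for each orbit set $\mathfrak{a}_{j}=\bigoplus_{i\in O_{j}}L_{i}$. Each $\mathfrak{a}_{j}$ is $\alpha$-invariant, since $\alpha$ permutes the factors within a single orbit, and it satisfies $[\mathfrak{a}_{j},\mathfrak{a}_{j}]_{\alpha}\subseteq\mathfrak{a}_{j}$, so $(\mathfrak{a}_{j},[\cdot,\cdot]_{\alpha},\alpha|_{\mathfrak{a}_{j}})$ is a multiplicative Hom-Lie subalgebra. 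Moreover, distinct simple factors of $L$ commute, so for $i\neq j$ one has $[L_{p},L_{q}]=0$ for $p\in O_{j}$, $q\in O_{i}$; since $[\cdot,\cdot]_{\alpha}=\alpha([\cdot,\cdot])$ this forces $[\mathfrak{a}_{j},\mathfrak{a}_{i}]_{\alpha}=0$. Thus each $\mathfrak{a}_{j}$ is a Hom-Lie ideal and $\mathfrak{g}=\mathfrak{a}_{1}\oplus\cdots\oplus\mathfrak{a}_{m}$ as Hom-Lie algebras.

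The main step, and the place where Theorem \ref{simplethm} does the real work, is to show that each $\mathfrak{a}_{j}$ is a simple multiplicative Hom-Lie algebra. Restricted to $O_{j}$, the automorphism $\alpha$ acts simply transitively (cyclically) on $\{L_{i}:i\in O_{j}\}$, which is exactly the configuration characterizing a simple Hom-Lie algebra in Theorem \ref{simplethm}. I would also verify simplicity directly: if $\mathfrak{I}\neq 0$ is a Hom-Lie ideal of $\mathfrak{a}_{j}$, then $\alpha$-invariance of $\mathfrak{I}$ (whence $\alpha^{-1}(\mathfrak{I})=\mathfrak{I}$ by finite-dimensionality) together with $[\mathfrak{I},\mathfrak{a}_{j}]=\alpha^{-1}([\mathfrak{I},\mathfrak{a}_{j}]_{\alpha})\subseteq\mathfrak{I}$ shows that $\mathfrak{I}$ is an $\alpha$-stable Lie ideal of the semisimple Lie algebra $\mathfrak{a}_{j}$. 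Hence $\mathfrak{I}$ is a sum of factors $L_{i}$ indexed by a $\sigma$-stable subset of $O_{j}$, and since $O_{j}$ is a single orbit this subset is empty or all of $O_{j}$, giving $\mathfrak{I}=0$ or $\mathfrak{I}=\mathfrak{a}_{j}$. Finally $[\mathfrak{a}_{j},\mathfrak{a}_{j}]_{\alpha}=\alpha([\mathfrak{a}_{j},\mathfrak{a}_{j}])=\alpha(\mathfrak{a}_{j})=\mathfrak{a}_{j}$, because $\mathfrak{a}_{j}$ equals its own derived algebra and $\alpha$ is onto. This exhibits $\mathfrak{g}$ as a direct sum of simple multiplicative Hom-Lie ideals.

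The only delicate point I anticipate is the justification that $\alpha$ is genuinely an automorphism of $L$, so that it permutes the $L_{i}$ and the orbit argument applies; this is where I would lean on the regularity implicit in the Lie-type/induced-Lie-algebra setting rather than on any new input, after which the argument is essentially the orbit bookkeeping of $\sigma$ on the simple factors.
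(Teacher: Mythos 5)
Your argument is correct, but it reaches the decomposition by a different mechanism than the paper. The paper's proof, after observing that the induced Lie algebra is semisimple, picks one minimal (hence simple) Lie ideal $\mathfrak{g}_{1}$, forms the orbit sum $\mathfrak{b}=\mathfrak{g}_{1}\oplus\alpha(\mathfrak{g}_{1})\oplus\cdots\oplus\alpha^{n-1}(\mathfrak{g}_{1})$, and then produces the complementary summand as the Killing-orthogonal complement $\mathfrak{b}'$: invariance of $K$ and $\alpha(\mathfrak{b})=\mathfrak{b}$ show $\mathfrak{b}'$ is a Hom-ideal, non-degeneracy of $K$ rules out $\mathfrak{b}\cap\mathfrak{b}'=\mathfrak{b}$, and the proof finishes by induction on $\dim\mathfrak{g}$. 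You instead take the full classical decomposition $L=L_{1}\oplus\cdots\oplus L_{k}$ into simple Lie ideals, use the fact that the automorphism $\alpha$ permutes them, and read off the Hom-simple summands as the orbit sums $\mathfrak{a}_{j}$ all at once, with the complement of each $\mathfrak{a}_{j}$ being simply the sum of the remaining orbits. Your route avoids both the Killing form and the induction, at the cost of importing the uniqueness of the simple-ideal decomposition and the permutation property of automorphisms; the paper's route is more self-contained and consistent with its Killing-form treatment of Theorem \ref{simplethm}. A point in your favor is that you actually prove each orbit sum is Hom-simple (via the observation that a nonzero Hom-ideal of $\mathfrak{a}_{j}$ is an $\alpha$-stable Lie ideal, hence indexed by a $\sigma$-stable subset of a single orbit), whereas the paper merely asserts that $\mathfrak{b}$ is a simple ideal. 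Both proofs share the same implicit reliance on regularity of $\alpha$ (needed for the induced Lie algebra to exist at all), which you correctly flag rather than leave unexamined.
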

\begin{proof}
For $\mathfrak{g}$ semisimple Hom-Lie algebra, the Killing form of the induced Lie algebra will be non-degenerate and from the previous definition the induced Lie algebra
 is semisimple. We take a minimal ideal  $\mathfrak{g}_{1}$ which will be a simple ideal. Let $n$ be the minimal integer such that $\alpha^{n-1}(\mathfrak{g}_{1})\neq \mathfrak{g}_{1}$ and $\alpha^n(\mathfrak{g}_{1})= \mathfrak{g}_{1}$. The algebra $\mathfrak{b}=\mathfrak{g}_{1}\oplus\alpha(\mathfrak{g}_{1})\oplus\ldots\oplus\alpha^{n-1}
 (\mathfrak{g}_{1})$ is a simple ideal of the semisimple multiplicative Lie algebra. Let $\mathfrak{b}'$ be the subspace of $\mathfrak{g}$ orthogonal (with respect to  $K$) to $\mathfrak{b}$. As $K$ is invariant, $\mathfrak{b}'$ is an ideal of $\mathfrak{g}$. In fact, let $x \in  \mathfrak{b}, y \in  \mathfrak{b}'$, $z\in \mathfrak{g}$, by invariance of $K$, we have $K(x, [z, y])=K([x, z], y)= 0$ since $[x, z]\in \mathfrak{b}$. Moreover, since $\alpha(\mathfrak{b})=\mathfrak{b}$, $K(x,\alpha(y))=K(\alpha^{-1}(x),y)=0$, then $\mathfrak{b}'$ is an ideal of the Hom-Lie algebra $(\mathfrak{g},[.,.]_\alpha,\alpha)$.

 By minimality of $\mathfrak{b}$, the intersection $\mathfrak{b}\cap \mathfrak{b}'$ can only be $(0)$ or $\mathfrak{b}$. We can prove that  the second case cannot occur. If not $K(x,y)=0, \forall x,y\in \mathfrak{b}$ and $x=\sum_{i=1}^k[x_i,y_i]$ since $[\mathfrak{b},\mathfrak{b}]=\mathfrak{b}$.Then for all $z\in \mathfrak{g}$, using the invariance of $K$ and $\mathfrak{b}$ an ideal, we have
 $$K(x,z)=K(\sum_{i=1}^k[x_i,y_i],z)=\sum_{i=1}^{k}K([x_i,y_i],z)=\sum_{i=1}^{k}
 K(x_i,[y_i,z])=0$$ which contradicts the fact that $K$ is non-degenerate. Hence $\mathfrak{g}=\mathfrak{b}\oplus\mathfrak{b}'$.
The restriction of $K$ to $\mathfrak{b}'\times\mathfrak{b}'$ is invariant non-degenerate bilinear form.\\ The proof is completed by induction on the dimension of $\mathfrak{g}$.
\end{proof}

\section{Representations of simple multiplicative Hom-Lie algebras}
We aim in this section to characterize representations of simple multiplicative Hom-Lie algebras and provide the relationship with those  of the
induced semisimple Lie algebra.
\begin{prop}
Let $(V,\r_{\b},\b)$ be a representation of a simple multiplicative Hom-Lie algebra $(\mathfrak{g},[\cdot,\cdot]_{\alpha},\alpha)$. Then, $Im(\b)$ and $Ker(\b)$ are
submodules of $V$ for $(\mathfrak{g},[\cdot,\cdot]_{\alpha},\alpha)$. Moreover,
we have an isomorphism of $(\mathfrak{g},[\cdot,\cdot]_{\alpha},\alpha)$-modules
$\overline{\b}:V/Ker(\b)\longrightarrow Im(\b).$
\end{prop}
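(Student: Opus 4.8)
The two ingredients I would rely on are the compatibility relation (\ref{ss}), that is $\rho_{\beta}(\alpha(x))\circ\beta=\beta\circ\rho_{\beta}(x)$, and the invertibility of $\alpha$. The latter is available because $(\mathfrak{g},[\cdot,\cdot]_{\alpha},\alpha)$ is simple and multiplicative, hence regular by Proposition \ref{morsimple}, so $\alpha\in Aut(\mathfrak{g})$. Throughout I use that a subspace of $V$ is a submodule exactly when it is stable under $\beta$ and under every operator $\rho_{\beta}(x)$, $x\in\mathfrak{g}$.

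First I would treat $Ker(\beta)$. Stability under $\beta$ is trivial, since $\beta(Ker(\beta))=\{0\}\subseteq Ker(\beta)$. For stability under the action, fix $v\in Ker(\beta)$ and $x\in\mathfrak{g}$; relation (\ref{ss}) gives $\beta\big(\rho_{\beta}(x)v\big)=\rho_{\beta}(\alpha(x))\big(\beta(v)\big)=0$, so $\rho_{\beta}(x)v\in Ker(\beta)$. Hence $Ker(\beta)$ is a submodule.

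Next I would treat $Im(\beta)$, where the invertibility of $\alpha$ becomes essential. Stability under $\beta$ is again clear. Given $w=\beta(v)\in Im(\beta)$ and $x\in\mathfrak{g}$, I write $x=\alpha(\alpha^{-1}(x))$ and apply (\ref{ss}) to obtain $\rho_{\beta}(x)w=\rho_{\beta}(\alpha(\alpha^{-1}(x)))\beta(v)=\beta\big(\rho_{\beta}(\alpha^{-1}(x))v\big)\in Im(\beta)$. Thus $Im(\beta)$ is a submodule as well, and both $Ker(\beta)$ and $Im(\beta)$ inherit the corresponding module structures (the quotient structure on $V/Ker(\beta)$ and the restricted structure on $Im(\beta)$).

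Finally, for the isomorphism, the classical first isomorphism theorem for linear maps produces a well-defined bijective linear map $\overline{\beta}:V/Ker(\beta)\to Im(\beta)$, $\overline{\beta}(v+Ker(\beta))=\beta(v)$; it remains only to see it is a morphism of Hom-Lie modules. Compatibility with the twisting maps is immediate, as both $\overline{\beta}$ applied after the quotient twist and $\beta|_{Im(\beta)}$ applied after $\overline{\beta}$ send $v+Ker(\beta)$ to $\beta^{2}(v)$. The \emph{main obstacle} is the compatibility with the $\mathfrak{g}$-action: using (\ref{ss}) one finds $\overline{\beta}\big(\rho_{\beta}(x)v+Ker(\beta)\big)=\beta\big(\rho_{\beta}(x)v\big)=\rho_{\beta}(\alpha(x))\beta(v)$, so that $\overline{\beta}$ intertwines the action of $x$ on the source with the action of $\alpha(x)$ on the target. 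I would close this step by invoking once more that $\alpha$ is an automorphism of $\mathfrak{g}$, which is precisely what allows this twisted intertwining to be read as the required module-morphism identity and makes $\overline{\beta}$ an isomorphism of $(\mathfrak{g},[\cdot,\cdot]_{\alpha},\alpha)$-modules; carefully tracking this occurrence of $\alpha$ is the one genuinely delicate point of the argument.
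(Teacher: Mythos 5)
Your argument for the two submodule claims is correct and is essentially the paper's own proof: $Ker(\beta)$ is handled directly from relation (\ref{ss}), and $Im(\beta)$ by writing $x=\alpha(\alpha^{-1}(x))$ and using that $\alpha$ is invertible (Proposition \ref{morsimple}), exactly as the authors do. Where you go beyond the paper is the isomorphism $\overline{\beta}:V/Ker(\beta)\to Im(\beta)$: the paper's proof simply stops after the submodule statements and never addresses this claim, so your treatment is a genuine addition rather than a variant. Your key observation there --- that (\ref{ss}) forces $\overline{\beta}\circ\rho_{\beta}(x)=\rho_{\beta}(\alpha(x))\circ\overline{\beta}$, i.e.\ $\overline{\beta}$ intertwines the action of $x$ with that of $\alpha(x)$ rather than commuting with the action on the nose --- is exactly the delicate point, and you are right to flag it. The paper never defines what a morphism of $\mathfrak{g}$-modules is in the Hom-setting, so strictly speaking the last step of your argument rests on adopting the twisted intertwining relation (the same one $\beta$ itself satisfies on $V$) as the definition of module morphism; with that convention your proof is complete, and without it the statement as written is not literally established by either you or the authors. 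It would strengthen your write-up to say this explicitly rather than leaving it at ``allows this twisted intertwining to be read as the required module-morphism identity.''
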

\begin{proof}
Let $v\in Ker(\b), \r_{\b}(\alpha(x))\circ\b(v)=0=\b(\r_{\b}(x)(v)),
\forall x\in\mathfrak{g}.$ Then, $\r_{\b}(\alpha(x))(v)\in Ker(\b).$ So $Ker(\b)$ is a submodule of $V$. 
Now let $v\in Im(\b)$, there exists $w\in V$ such that $v=\b(w).$ 
Since $\alpha$ is an automorphism $\forall x\in \mathfrak{g},
\rho_{\b}(x)(v)=\r_{\b}(\alpha(\alpha^{-1}(x)))\b(w)=\b(\r_{\b}(\alpha^{-1}(x)w)$.
So $\r_{\b}(x)(v)\in Im(\b), \forall x\in \mathfrak{g}$ and therefore
$Im(\b)$ is a submodule of V.
\end{proof}
\begin{cor}
If $(V,\r_{\b},\b)$ is an irreducible representation of a simple multiplicative
Hom-Lie algebra $(\mathfrak{g},[\cdot,\cdot]_{\alpha},\alpha)$. Then
$\b$ is invertible.
\end{cor}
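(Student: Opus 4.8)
The plan is to leverage the preceding proposition, which tells us that $Ker(\b)$ and $Im(\b)$ are both $\mathfrak{g}$-submodules of $V$, together with the defining feature of an irreducible module: its only submodules are $0$ and $V$. First I would record the resulting dichotomy. Since $Ker(\b)\in\{0,V\}$ and $Im(\b)\in\{0,V\}$, the endomorphism $\b$ is either zero or a bijection. Concretely, suppose $\b\neq 0$. Then $Im(\b)\neq 0$, so irreducibility forces $Im(\b)=V$ and $\b$ is surjective; likewise $Ker(\b)\neq V$, so irreducibility forces $Ker(\b)=0$ and $\b$ is injective. Hence $\b$ is bijective, i.e. invertible. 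I would emphasize that this half of the argument is purely module-theoretic: it needs neither finite-dimensionality nor the explicit form of the module relations, only that $Ker(\b)$ and $Im(\b)$ are submodules.

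The main obstacle is therefore to exclude the remaining case $\b=0$. Here the relations of Definition \ref{sheng} degenerate: condition (\ref{ss}) becomes vacuous and (\ref{sss}) collapses to the twisted symmetry identity $\rho_{\b}(\alpha(x))\rho_{\b}(y)=\rho_{\b}(\alpha(y))\rho_{\b}(x)$ for all $x,y\in\mathfrak{g}$. Using that $\alpha$ is an automorphism (Proposition \ref{morsimple}), I would substitute $x=\alpha^{-1}(a)$ to rewrite this as $\rho_{\b}(a)\rho_{\b}(b)=\rho_{\b}(\alpha(b))\rho_{\b}(\alpha^{-1}(a))$. For any $z$ this gives $\rho_{\b}(z)\rho_{\b}(b)v=\rho_{\b}(\alpha(b))\rho_{\b}(\alpha^{-1}(z))v\in Im(\rho_{\b}(\alpha(b)))$, which shows that $W:=\sum_{x\in\mathfrak{g}}Im(\rho_{\b}(x))$ is stable under every $\rho_{\b}(z)$ and (trivially) under $\b=0$, hence is a submodule; dually, $U:=\bigcap_{x\in\mathfrak{g}}Ker(\rho_{\b}(x))$ is a submodule as well.

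The delicate point, which I expect to be the real crux, is then the analysis of these two submodules. Irreducibility leaves only $W\in\{0,V\}$ and $U\in\{0,V\}$: if $\rho_{\b}\equiv 0$ the whole $V$ carries the trivial action, so that every subspace is a submodule and irreducibility compels $\dim V=1$. The hard part will be to argue that when $\b=0$ the twisted-commutativity relation, combined with $\mathbb{K}$ being algebraically closed, forces precisely this degeneration of the action, so that a nonzero irreducible module with $\b=0$ can only be the one-dimensional trivial module. Ruling out (or, as is standard, excluding as a degenerate representation) this one-dimensional trivial case is exactly what is needed; once $\b=0$ is discarded, the dichotomy of the first paragraph yields that $\b$ is invertible.
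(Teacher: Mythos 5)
Your first paragraph is exactly the paper's argument: the corollary is stated without proof immediately after the proposition showing that $Ker(\b)$ and $Im(\b)$ are submodules, and the intended reasoning is precisely the dichotomy you describe ($Ker(\b)\in\{0,V\}$, $Im(\b)\in\{0,V\}$, hence $\b$ is either zero or bijective). So for the main step you and the paper coincide.

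Where you go beyond the paper is in noticing that the dichotomy alone does not exclude $\b=0$: the configuration $Ker(\b)=V$, $Im(\b)=0$ is perfectly compatible with irreducibility, since $V$ and $0$ are exactly the two submodules an irreducible module is allowed to have. This is a real gap, and the paper simply does not address it. Your sketch for closing it, however, is not yet a proof. Indeed, under the definitions as literally stated, the case cannot be ruled out: take $V$ one-dimensional, $\b=0$, and $\rho_{\b}$ any linear functional; condition \eqref{ss} is vacuous and condition \eqref{sss} reduces to $\rho_{\b}(\alpha(x))\rho_{\b}(y)=\rho_{\b}(\alpha(y))\rho_{\b}(x)$, which holds automatically in $End(V)\cong\mathbb{K}$, and $V$ has precisely the two submodules $0$ and $V$. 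So the statement needs either an added hypothesis ($\b\neq 0$, or $\dim V\geq 2$, or a faithfulness/nontriviality convention on representations) or a genuinely new argument; your appeal to algebraic closedness and to the submodules $W=\sum_x Im(\rho_{\b}(x))$ and $U=\bigcap_x Ker(\rho_{\b}(x))$ does not by itself force $\dim V=1$, and in any case cannot dispose of the one-dimensional example. In short: your proof of the intended content matches the paper's; the residual $\b=0$ case is a gap in both, which you correctly flag but do not close.
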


\begin{prop}
Let $(\mathfrak{g},[\cdot,\cdot]_{\alpha},\alpha)$ be a simple multiplicative Hom-Lie
algebra and $(V,\r_{\b},\b)$ a representation with $\b$ invertible.
If $(V,\r:=\b^{-1}\circ\r_{\b})$ is irreducible representation  of  the induced Lie algebra. Then
$(V,\r_{\b},\b)$ is irreducible representation of  the multiplicative simple Hom-Lie algebra.
\end{prop}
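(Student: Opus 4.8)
The plan is to show directly that $(V,\r_{\b},\b)$ has no nontrivial proper submodule, by proving that every Hom-Lie submodule is automatically a submodule of the induced Lie module $(V,\r)$ and then invoking the irreducibility of the latter. First I note that the reduction $\r=\b^{-1}\circ\r_{\b}$ is a legitimate representation of the induced Lie algebra $(\mathfrak{g},[\cdot,\cdot])$: indeed, a simple multiplicative Hom-Lie algebra is regular by Proposition~\ref{morsimple}, hence of Lie-type, so that Theorem~\ref{Keythm}(1) applies and $(V,\r)$ makes sense. Now let $W\subseteq V$ be any $\mathfrak{g}$-submodule of $(V,\r_{\b},\b)$, that is, a subspace with $\b(W)\subseteq W$ and $\r_{\b}(x)(W)\subseteq W$ for every $x\in\mathfrak{g}$; the goal is to show that $W$ is stable under $\r(x)=\b^{-1}\circ\r_{\b}(x)$ for all $x$.

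The crux is to upgrade the one-sided stability $\b(W)\subseteq W$ to the two-sided equality $\b(W)=W$, equivalently $\b^{-1}(W)\subseteq W$. Since $\b$ is invertible on $V$, its restriction $\b|_{W}\colon W\to W$ is injective; as $V$ (hence $W$) is finite-dimensional, an injective endomorphism of $W$ is surjective, so $\b(W)=W$ and therefore $\b^{-1}(W)=W$. I expect this to be the only real obstacle: it is precisely here that the hypothesis that $\b$ be invertible is used in an essential way, and it is also the step that silently requires finite-dimensionality, since a merely $\b$-stable subspace need not be $\b^{-1}$-stable in general.

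Granting $\b^{-1}(W)=W$, the conclusion is immediate: for every $x\in\mathfrak{g}$,
\[
\r(x)(W)=\b^{-1}\bigl(\r_{\b}(x)(W)\bigr)\subseteq\b^{-1}(W)=W,
\]
using $\r_{\b}(x)(W)\subseteq W$. Hence $W$ is a submodule of the Lie module $(V,\r)$. Since $(V,\r)$ is irreducible by assumption, $W=0$ or $W=V$. As $W$ was an arbitrary Hom-Lie submodule, $(V,\r_{\b},\b)$ admits precisely the two submodules $0$ and $V$, i.e. it is an irreducible representation of the simple multiplicative Hom-Lie algebra, as desired.
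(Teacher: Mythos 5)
Your argument is essentially the paper's: the paper also shows that any Hom-Lie submodule $W$ (stable under $\b$ and all $\r_{\b}(x)$) satisfies $\r(x)W\subseteq\b^{-1}(W)\subseteq W$ and is therefore a submodule of the irreducible induced Lie module, forcing $W=0$ or $W=V$. The only difference is that you explicitly justify the inclusion $\b^{-1}(W)\subseteq W$ via injectivity of $\b|_{W}$ and finite-dimensionality, a step the paper asserts without comment; this is a worthwhile clarification but not a different proof.
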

\begin{proof}
Assume that $(V,\r_{\b},\b)$ is reducible. Then, there exists $W\neq\{0_{V}\}$ a subspace
of $V$ such that $(V,\r_{\b},\b\mid_{W})$ is a submodule of
$(\mathfrak{g},[\cdot,\cdot]_{\alpha},\alpha)$.
That is $\b(W)\subset W$ and $\r_{\b}(x)W\subset W, \forall x\in \mathfrak{g}$.
Hence, $\b\circ\r(x)W\subset W, \forall x\in \mathfrak{g}$ and then
$\r(x)W\subset \b^{-1}(W)\subset W, \forall x\in \mathfrak{g}$ and so $W$
is a submodule for $(V,\rho)$ which is a contradiction.
\end{proof}
\begin{prop}\label{prop4.4}
Let $\mathfrak{g}$ be a simple Lie algebra and $(W,\r)$ be a representation of $\mathfrak{g}$.
Then, for $\alpha \in Aut(\mathfrak{g}),~~(W,\widetilde{\r}:=\r\circ\alpha^{-k})$
is a representation of $\alpha^{k}(\mathfrak{g}).$ Moreover, there exists $\b\in GL(W)$
such that $\widetilde{\r}(\alpha^{k}(x))=\b^{k}\circ \r_{k}(x)\circ\b^{-k},
\forall x\in \mathfrak{g}$.
\end{prop}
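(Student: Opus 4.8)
The plan is to establish the two assertions in turn. For the first, I would simply check that $\widetilde{\rho}=\rho\circ\alpha^{-k}$ is a Lie algebra homomorphism on $\alpha^{k}(\mathfrak{g})=\mathfrak{g}$. Since $\alpha^{-k}\in\mathrm{Aut}(\mathfrak{g})$ and $\rho$ is a representation, for all $X,Y\in\alpha^{k}(\mathfrak{g})$ one computes
\begin{equation*}
\widetilde{\rho}([X,Y])=\rho\big(\alpha^{-k}[X,Y]\big)=\rho\big([\alpha^{-k}X,\alpha^{-k}Y]\big)=[\rho(\alpha^{-k}X),\rho(\alpha^{-k}Y)]=[\widetilde{\rho}(X),\widetilde{\rho}(Y)],
\end{equation*}
so $(W,\widetilde{\rho})$ is a representation of $\alpha^{k}(\mathfrak{g})$. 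This step is purely formal.

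For the second assertion the real content is the existence of an invertible twist $\beta$. Observing that $\widetilde{\rho}(\alpha^{k}(x))=\rho(\alpha^{-k}(\alpha^{k}(x)))=\rho(x)$, the displayed identity is equivalent to $\rho(\alpha^{k}(x))=\beta^{k}\rho(x)\beta^{-k}$ for all $x\in\mathfrak{g}$, and by an immediate induction this reduces to the single intertwining relation
\begin{equation*}
\rho(\alpha(x))=\beta\,\rho(x)\,\beta^{-1},\qquad\forall\,x\in\mathfrak{g},
\end{equation*}
which is exactly the compatibility condition \eqref{char} of Theorem \ref{Keythm}. Thus everything comes down to producing one $\beta\in GL(W)$ intertwining $\rho$ with its $\alpha$-twist $\rho\circ\alpha$; raising such a $\beta$ to the $k$-th power then yields the stated formula and, in the notation of the statement, identifies $\rho_{k}=\widetilde{\rho}$.

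To construct $\beta$ I would first reduce to the irreducible case by decomposing $W$ into $\mathfrak{g}$-isotypic components, since an intertwiner can be assembled block by block once it exists at the level of isomorphism classes. For $\rho$ irreducible, $\rho\circ\alpha$ is again irreducible of the same dimension (as $\alpha$ is an automorphism), so by Schur's lemma the space $\mathrm{Hom}_{\mathfrak{g}}(\rho,\rho\circ\alpha)$ is at most one-dimensional and any nonzero element is automatically invertible. Hence, as soon as $\rho\cong\rho\circ\alpha$ is known, a nonzero $\beta$ exists, is invertible, and is unique up to a scalar; fixing it and inducting gives $\rho(\alpha^{k}(x))=\beta^{k}\rho(x)\beta^{-k}$, as needed.

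The main obstacle is precisely the equivalence $\rho\cong\rho\circ\alpha$. For a general outer automorphism it can fail (a diagram automorphism of $\mathfrak{sl}_{n}$ carries a module to its dual), so some input is required. In the present framework this equivalence is not an extra hypothesis but is forced by the Hom-Lie structure: the representations at issue are the restrictions $(W,\rho=\beta^{-1}\circ\rho_{\beta})$ of modules over the simple multiplicative Hom-Lie algebra, and for these Theorem \ref{Keythm} together with Proposition \ref{alphabeta} already furnish an invertible $\beta$ satisfying $\rho\circ\alpha=\mathrm{Ad}(\beta)\circ\rho$. Equivalently, $\alpha$ stabilises the isomorphism class of $\rho$ in this setting, so the Schur argument applies and delivers the required twist, completing the argument.
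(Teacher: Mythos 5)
Your argument is correct as far as it goes, and it takes a genuinely different route from the paper's. The paper posits a commutative square $\widetilde{\rho}\circ\alpha^{k}=T\circ\rho$ for some automorphism $T$ of $\mathrm{End}(W)$, invokes the Skolem--Noether theorem to write $T=\mathrm{Ad}_{S}$ with $S\in GL(W)$, and then extracts a $k$-th root $\beta$ of $S$ by linear algebra. You instead reduce everything to the one-step intertwining relation $\rho(\alpha(x))=\beta\rho(x)\beta^{-1}$ and produce $\beta$ via Schur's lemma on isotypic components. Your version is in fact sharper: a $k$-th root of $S$ only satisfies the $k$-step relation, whereas the one-step relation is what is actually needed downstream (the proof of Theorem \ref{thmbase} conjugates by $\beta_{k}^{k+1}$, not $\beta_{k}^{k}$, and condition \eqref{char} is precisely the one-step relation). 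You have also correctly exposed the point the paper glosses over: the existence of $T$, equivalently of any invertible intertwiner, is exactly the assertion $\rho\cong\rho\circ\alpha$, which can fail when $\alpha$ is outer.

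The gap is in your final paragraph. The equivalence $\rho\cong\rho\circ\alpha$ cannot be ``forced by the Hom-Lie structure'' here, because the proposition is applied in Theorem \ref{thmbase} and Proposition \ref{weightmodul} to \emph{arbitrary} representations $(V_{k},\rho_{k})$ of the simple ideal $\mathfrak{g}_{1}$, precisely in order to construct the maps $\beta_{k}$ and hence the Hom-module $(V,\rho_{\beta},\beta)$; assuming that Hom-module already exists in order to obtain the intertwiner is circular. As stated, with no hypothesis relating $\rho$ to $\alpha$, the ``moreover'' clause is either vacuous (under the literal reading $\rho_{k}=\widetilde{\rho}=\rho\circ\alpha^{-k}$ the displayed identity holds with $\beta=\mathrm{Id}$) or false under your nontrivial reading: for $\mathfrak{g}=\mathfrak{sl}_{3}$, $\alpha(x)=-x^{t}$ and $\rho$ the standard representation, $\rho\circ\alpha$ is the dual representation and no invertible intertwiner exists. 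The honest repair is to add the hypothesis $\rho\cong\rho\circ\alpha$ (automatic when $\alpha$ is inner, or when $\alpha$ preserves the highest weights of the irreducible constituents of $\rho$); the paper's own proof carries the same unacknowledged gap, buried in the unproved commutativity of its diagram, so your diagnosis is the right one even though your proposed closure of the gap does not work.
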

\begin{proof}
Let $(W,\r)$ be a representation of $\mathfrak{g}$ on $W$ such that the following diagram
is commutative
\begin{equation}
 \begin{array}{clcrl}
\mathfrak{g} &  \overset{\r}{\longrightarrow}   & End(W) \\
      \alpha^{k}\Big\downarrow    &\,&  \Big\downarrow T \\
    \alpha^{k}(\mathfrak{g}) & \overset{ \widetilde{\r}}\longrightarrow  & End(W)
\end{array}
\end{equation}
That is there exists $T$ such that $\widetilde{\r}\circ\alpha^{k}=T\circ\r$. Using Skolem-Noether Theorem \cite{B}, there exists $S\in GL(W)$ such that
$\forall x\in \mathfrak{g},~~ T\circ\r(x)=S\circ\r(x)\circ S^{-1}$. Then, we have
$\widetilde{\r}\circ \alpha^{k}(x)=S\circ\r(x)\circ S^{-1}$.\\
By basic linear algebra theory, there exists $\b\in GL(W)$ such that $S=\b^{k}$. Then the commutativity becomes
$\widetilde{\r}\circ\alpha^{k}(x)=\b^{k}\circ\r(x)\circ\b^{-k}, \forall x\in\mathfrak{g}.$
\end{proof}

 \begin{prop}(\cite{B1})\label{prop4.5}
 Let $\mathfrak{g}=\mathfrak{g}_{1}\oplus...\oplus
\mathfrak{g}_{n}$ be a  semisimple Lie algebra and let $(V_{i},\r_{i})$ be a $\mathfrak{g}_{i}$-module, $\forall~~ 0\leq i\leq n$. Then, an irreducible representation of $\mathfrak{g}$ is given by $(V,\rho)$ where $V=V_{1}\otimes...\otimes V_{n}$
 and $\rho$ is given for $x=(x_1,...,x_n)$ by
\begin{eqnarray*}
       \r:\mathfrak{g} &\longrightarrow&End(V_{1}\otimes...\otimes V_{n})  \\
       x &\longmapsto& \r(x)=\displaystyle\sum^{n}_{i=1}
       Id_{V_1}\otimes...\otimes\r_{i}(x_{i})\otimes...\otimes Id_{V_{n}}.
     \end{eqnarray*}
     \end{prop}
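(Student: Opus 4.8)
The plan is to verify first that $\rho$ is a Lie algebra homomorphism and then to prove irreducibility by a density (Burnside) argument. Note that the conclusion forces us to read the hypothesis as: each $(V_i,\rho_i)$ is an \emph{irreducible} finite-dimensional $\mathfrak{g}_i$-module (otherwise a reducible factor immediately produces a reducible tensor product), and I would state this at the outset.

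First I would check that $\rho$ respects the bracket. Since $\mathfrak{g}=\mathfrak{g}_1\oplus\cdots\oplus\mathfrak{g}_n$ with each $\mathfrak{g}_i$ an ideal and $[\mathfrak{g}_i,\mathfrak{g}_j]=0$ for $i\neq j$, the bracket is componentwise, $[x,y]=([x_1,y_1],\ldots,[x_n,y_n])$. In the product $\rho(x)\rho(y)$ the contributions supported on two distinct tensor slots $i\neq j$ commute, hence cancel in $\rho(x)\rho(y)-\rho(y)\rho(x)$; only the diagonal terms survive, so that
$$[\rho(x),\rho(y)]=\sum_{i=1}^{n}Id_{V_1}\otimes\cdots\otimes[\rho_i(x_i),\rho_i(y_i)]\otimes\cdots\otimes Id_{V_n}=\rho([x,y]),$$
using that each $\rho_i$ is a representation of $\mathfrak{g}_i$. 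This part is routine.

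The core is irreducibility, and here I would work over the algebraically closed field $\mathbb{K}$ of characteristic $0$ via Burnside's theorem: a finite-dimensional module $W$ is irreducible if and only if the associative subalgebra of $End(W)$ generated by the acting operators equals $End(W)$. Applying this to the irreducible $(V_i,\rho_i)$, the associative algebra generated by $\rho_i(\mathfrak{g}_i)$ is all of $End(V_i)$. Let $\mathcal{A}\subseteq End(V)$ be the associative algebra generated by $\rho(\mathfrak{g})$. For each fixed $i$ the operators $Id_{V_1}\otimes\cdots\otimes\rho_i(x_i)\otimes\cdots\otimes Id_{V_n}$, with $x_i\in\mathfrak{g}_i$, belong to $\mathcal{A}$ and generate the subalgebra $\mathcal{A}_i:=Id_{V_1}\otimes\cdots\otimes End(V_i)\otimes\cdots\otimes Id_{V_n}$, so $\mathcal{A}_i\subseteq\mathcal{A}$ for every $i$.

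Finally, multiplying one element chosen from each $\mathcal{A}_i$ yields every elementary tensor $E_1\otimes\cdots\otimes E_n$ with $E_i\in End(V_i)$; in finite dimension these span $End(V_1)\otimes\cdots\otimes End(V_n)=End(V)$, whence $\mathcal{A}=End(V)$ and Burnside's theorem gives irreducibility of $(V,\rho)$. The hard part will be precisely this irreducibility step: it rests on invoking the Burnside/density theorem together with the identification $End(V_1\otimes\cdots\otimes V_n)\cong End(V_1)\otimes\cdots\otimes End(V_n)$, valid because all $V_i$ are finite-dimensional. Equivalently, one may induct on $n$, reducing to $n=2$ and applying the double-commutant theorem, where Schur's lemma over $\mathbb{K}$ forces the commutant of each factor to consist of scalars.
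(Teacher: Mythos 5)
The paper offers no proof of this proposition: it is quoted from Bourbaki \cite{B1} and used as a black box in the proof of Theorem \ref{thmbase}, so there is no in-paper argument to compare yours against line by line. On its own terms your proof is correct and is the standard one. You are right that the statement only makes sense if each $(V_i,\rho_i)$ is assumed irreducible and finite-dimensional (the paper's phrasing omits this, and its indexing ``$0\leq i\leq n$'' against $V_1,\dots,V_n$ is already inconsistent), and your two steps --- the cancellation of the off-diagonal cross terms in $[\rho(x),\rho(y)]$ because $[\mathfrak{g}_i,\mathfrak{g}_j]=0$ for $i\neq j$, followed by Burnside/density applied first to each factor and then to the generated subalgebra of $End(V)$ via $End(V_1)\otimes\cdots\otimes End(V_n)\cong End(V)$ --- are exactly where the content lies. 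One small point of hygiene: take $\mathcal{A}$ and the $\mathcal{A}_i$ to be the \emph{unital} subalgebras generated by the relevant operators, since $\rho_i(\mathfrak{g}_i)$ need not contain the identity (e.g.\ a trivial one-dimensional factor would make the non-unital $\mathcal{A}_i$ zero and break the product step); with that convention Burnside gives $\mathcal{A}_i=Id_{V_1}\otimes\cdots\otimes End(V_i)\otimes\cdots\otimes Id_{V_n}$ and the argument closes. Note finally that Bourbaki's theorem is stronger --- \emph{every} irreducible $\mathfrak{g}$-module is of this exterior tensor product form --- and it is that converse which the paper implicitly relies on later when decomposing weight modules of the induced semisimple Lie algebra; your proof establishes only the direction literally stated, which is all the quoted statement claims.
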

Using the classification of simple multiplicative Hom-Lie algebras
 by Chen and Han in Theorem \ref{simplethm},  Li Theorem  \ref{Keythm} and Proposition
 \ref{prop4.4}, we provide a  construction of representations of simple multiplicative
 Hom-Lie algebras.
\begin{thm}\label{thmbase}
Let $(\mathfrak{g}, [\cdot ,\cdot ]_{\alpha}, \alpha)$ be a simple multiplicative Hom-Lie algebra
and $\mathfrak{g}_{1}$ be a simple ideal generating the induced semisimple Lie algebra such that
$\mathfrak{g}=\mathfrak{g}_{1}\oplus\alpha(\mathfrak{g}_{1})\oplus...\oplus \alpha^{n-1}(\mathfrak{g}_{1})$.\\ Let $(V_0,\rho_0),~(V_1,\rho_1),\ldots,(V_{n-1},
\rho_{n-1})$ be $n$ representations of $\mathfrak{g}_{1}$.
Then,
\begin{itemize}
\item[(1)] There exist $\beta_0\in GL(V_0),
\beta_1\in GL(V_1),\ldots, \beta_{n-1}\in GL(V_{n-1})$ depending on $\alpha$ such that \\ $\rho:\mathfrak{g}\rightarrow End(V)$, where
$V=V_{0}\otimes...\otimes V_{n-1}$, defined for all $X=(x_0, \alpha(x_1),
\ldots,\alpha^{n-1}(x_{n-1}))\in \mathfrak{g}$ by
\begin{equation}\label{simplerep}
\rho(X)=\displaystyle\sum_{k=0}^{n-1} Id_{V_0}\otimes\ldots\otimes\Big(\beta_k^k\circ\rho_{k}(x_{k})\circ\beta_k^{-k}\Big)
\otimes\ldots\otimes Id_{V_{n-1}},
\end{equation}
 is an irreducible representation of the induced Lie algebra $(\mathfrak{g},[\cdot,\cdot])$ on $V$.
\item[(2)]Let $\beta:=\beta_{0}\otimes...\otimes\beta_{n-1}\in GL(V)$ and
let $\r_{\b}:\mathfrak{g}\longrightarrow End(V)$ defined by
\begin{equation}\label{Homsimplerep}
\rho_{\beta}(X)=\b\circ\r(X)= \displaystyle\sum^{n-1}_{k=0}
       \b_{0}\otimes...\otimes\b^{k+1}_{k}\circ\rho_{k}(x_{k})\circ\b^{-k}_{k}
\otimes...\otimes \b_{n-1}.
\end{equation}
Then the triple $(V,\rho_{\beta},\beta)$
is a regular representation of the simple multiplicative Hom-Lie algebra $(\mathfrak{g}, [\cdot ,\cdot ]_{\alpha}, \alpha)$.
\end{itemize}
\end{thm}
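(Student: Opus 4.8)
The plan is to assemble the two claimed facts from the machinery already established in the excerpt, treating each part in turn. For part (1), I would first verify that the map $\rho$ in \eqref{simplerep} is a well-defined Lie algebra representation, and then argue irreducibility. The decomposition $\mathfrak{g}=\mathfrak{g}_1\oplus\alpha(\mathfrak{g}_1)\oplus\cdots\oplus\alpha^{n-1}(\mathfrak{g}_1)$ from Theorem \ref{simplethm} expresses $(\mathfrak{g},[\cdot,\cdot])$ as a direct sum of $n$ mutually isomorphic simple ideals $\alpha^{k}(\mathfrak{g}_1)$. On each summand $\alpha^{k}(\mathfrak{g}_1)$ I would take the representation $(V_k,\widetilde{\rho}_k:=\rho_k\circ\alpha^{-k})$ supplied by Proposition \ref{prop4.4}; that same proposition produces $\beta_k\in GL(V_k)$ with $\widetilde{\rho}_k(\alpha^{k}(x_k))=\beta_k^{k}\circ\rho_k(x_k)\circ\beta_k^{-k}$. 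This is exactly the $k$-th tensor factor appearing in \eqref{simplerep}, so $\rho$ is precisely the representation of the semisimple Lie algebra $\mathfrak{g}=\bigoplus_k \alpha^k(\mathfrak{g}_1)$ built from the factor representations $(V_k,\widetilde{\rho}_k)$ via the outer-tensor construction of Proposition \ref{prop4.5}. Since each $(V_k,\widetilde{\rho}_k)$ is a representation of the simple ideal $\alpha^k(\mathfrak{g}_1)$, Proposition \ref{prop4.5} gives directly that $(V,\rho)$ with $V=V_0\otimes\cdots\otimes V_{n-1}$ is irreducible, which is the content of part (1).

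For part (2), the strategy is to apply the second half of the Key Theorem \ref{Keythm}, which converts a Lie-algebra representation into a Hom-Lie representation provided one exhibits a $\beta\in End(V)$ satisfying the intertwining condition \eqref{char}, namely $\beta\circ\rho(x)=\rho(\alpha(x))\circ\beta$ for all $x$. I would take $\beta:=\beta_0\otimes\cdots\otimes\beta_{n-1}$ and verify \eqref{char} directly. The key computation is to check how $\beta$ conjugates a single summand of $\rho$: on the $k$-th factor one has $\beta_k^{-1}\circ\bigl(\beta_k^{k}\circ\rho_k(x_k)\circ\beta_k^{-k}\bigr)\circ\beta_k = \beta_k^{k-1}\circ\rho_k(x_k)\circ\beta_k^{-(k-1)}$, and one must confront this with the effect of $\alpha$ on the index $k$. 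Because $\alpha$ cyclically permutes the ideals $\alpha^k(\mathfrak{g}_1)\mapsto\alpha^{k+1}(\mathfrak{g}_1)$, applying $\alpha$ inside $\rho$ shifts which summand is active, and the conjugation by $\beta$ must reproduce exactly this shift; this is the step where the powers $\beta_k^k$ in \eqref{simplerep} are engineered to make the bookkeeping close. Once \eqref{char} is confirmed, Theorem \ref{Keythm}(2) yields that $(V,\rho_\beta=\beta\circ\rho,\beta)$ is a representation of the Hom-Lie algebra, and the explicit form \eqref{Homsimplerep} follows by merely composing $\beta$ with \eqref{simplerep}. Regularity is then automatic: by the final Definition of the excerpt, a representation is regular when $(V,\rho=\beta^{-1}\circ\rho_\beta)$ is irreducible for the induced Lie algebra, which is exactly what part (1) established.

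I expect the main obstacle to be the verification of the intertwining identity \eqref{char} for the tensor product $\beta=\beta_0\otimes\cdots\otimes\beta_{n-1}$, specifically reconciling the per-factor conjugation by $\beta_k$ with the cyclic shift of summands induced by $\alpha$. The subtlety is that $\alpha$ does not act factor-by-factor in the same naive way that $\beta$ does: $\alpha$ sends an element with active component in $\alpha^k(\mathfrak{g}_1)$ to one with active component in $\alpha^{k+1}(\mathfrak{g}_1)$, whereas $\beta=\bigotimes_k\beta_k$ conjugates each tensor slot independently. Making these two operations agree requires carefully tracking the exponents on the $\beta_k$ and invoking the relations $\widetilde{\rho}_k(\alpha^{k}(x_k))=\beta_k^{k}\circ\rho_k(x_k)\circ\beta_k^{-k}$ from Proposition \ref{prop4.4} together with the cyclic structure $\alpha^n(\mathfrak{g}_1)=\mathfrak{g}_1$; I would organize this as an index-shifting computation, being attentive to the wrap-around at $k=n-1$, where the role of the relation $\beta_k^{n}$ and the periodicity of $\alpha$ must be used to restore consistency. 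The remaining steps—well-definedness, the outer-tensor irreducibility, and the passage from \eqref{simplerep} to \eqref{Homsimplerep}—are routine applications of the cited propositions and should not pose difficulties.
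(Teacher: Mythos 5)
Your proposal follows essentially the same route as the paper: part (1) is assembled from Proposition \ref{prop4.4} (supplying the $\beta_k$ and the conjugated factor representations of the ideals $\alpha^{k}(\mathfrak{g}_1)$) together with the outer-tensor irreducibility of Proposition \ref{prop4.5}, and part (2) is obtained by verifying the intertwining condition \eqref{char} for $\beta=\beta_0\otimes\cdots\otimes\beta_{n-1}$ and invoking Theorem \ref{Keythm}(2), with regularity read off from part (1). The cyclic-shift subtlety you single out as the main remaining obstacle is genuine, but the paper's own verification of \eqref{char} keeps each $x_k$ in the $k$-th tensor slot throughout and never confronts the permutation of the ideals under $\alpha$ either, so your plan is not missing any step that the paper actually supplies.
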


\begin{proof}
Let $\r:\mathfrak{g}_{1}\longrightarrow End(W)$ be a representation of $\mathfrak{g}_{1}$.
By Proposition \ref{prop4.4}, for all $k\in \mathbb{N}$, there exists $\b_{k}\in GL(W)$
and a representation $\widetilde{\r}_{k}$ of $\alpha^{k}(\mathfrak{g}_{1})$ on $W$ given by
$\widetilde{\r}_{k}(\alpha^{k}(x))=\b_{k}^{k}\circ\r_{k}(x)\circ\b^{-k}_{k}$.\\
Now for $(V_0,\rho_0),~(V_1,\rho_1),\ldots,(V_{n-1},
\rho_{n-1})$  $n$ representations of $\mathfrak{g}_{1}$, where $\mathfrak{g}=\mathfrak{g}_{1}
\oplus\alpha(\mathfrak{g}_{1})\oplus...\oplus\alpha^{n-1}(\mathfrak{g}_{1})$ and
$V=V_{0}\otimes...\otimes V_{n-1}$. Then, according to Proposition \ref{prop4.4} and Proposition \ref{prop4.5}, there exists $\b_{0}\in GL(V_{0}),...,\b_{n-1}\in GL(V_{n-1})$ such that $\r:\mathfrak{g}\longrightarrow End(V)$
defined by \[\begin{array}{lll}
\rho(x_0, \alpha(x_1),\ldots,\alpha^{n-1}(x_{n-1}))\\=\rho_0(x_0)\otimes Id_{V_1}\otimes\ldots\otimes Id_{V_{n-1}}+
Id_{V_0}\otimes\Big(\beta_1\circ\rho_1(x_1)\circ\beta_1^{-1}\Big)\otimes\ldots\otimes Id_{V_{n-1}}+\\
\ldots+Id_{V_0}\otimes Id_{V_1}\otimes\ldots\otimes \Big(\beta_{n-1}^{n-1}\circ\rho_{n-1}(x_{n-1})\circ\beta_{n-1}^{-n+1}\Big)\\
=\displaystyle\sum_{k=0}^{n-1} Id_{V_0}\otimes\ldots\otimes\Big(\beta_k^k\circ\rho_{k}(x_{k})\circ\beta_k^{-k}\Big)
\otimes\ldots\otimes Id_{V_{n-1}}
\end{array}\] is an irreducible representation of the induced Lie algebra $(\mathfrak{g},[\cdot,\cdot])$ on $V$,
which completes the proof of $(1).$\\
For $(2)$,  set $\beta:=\beta_{0}\otimes...\otimes \beta_{n-1}$. Condition \eqref{char}
is satisfied, that is for $(v_{0}\otimes...\otimes v_{n-1})\in V$, we have
\begin{small}
\begin{eqnarray*}
\b\circ \r(X)(v_{0}\otimes...\otimes
v_{n-1})&=&\b\circ\displaystyle\sum^{n-1}_{k=0}Id(v_{0})\otimes...\otimes
\b_{k}^{k}\circ\r_{k}(x_{k})\circ\b_{k}^{-k}(v_{k})
\otimes...\otimes Id(v_{n-1})\\
&=&\displaystyle\sum^{n-1}_{k=0}\b_{0}(v_{0})\otimes...\otimes \b_{k}^{k+1}\circ\r_{k}(x_{k})\circ\b_{k}^{-k}(v_{k})
\otimes...\otimes \b_{n-1}(v_{n-1})\\
&=&\displaystyle\sum^{n-1}_{k=0}\b_{0}(v_{0})\otimes...\otimes \b_{k}^{k+1}\circ\r_{k}(x_{k})\circ\b_{k}^{-k}\circ\b_{k}\circ\b_{k}^{-1}(v_{k})
\otimes...\otimes \b_{n-1}(v_{n-1})\\
&=&\displaystyle\sum^{n-1}_{k=0}\b_{0}(v_{0})\otimes...\otimes \b_{k}^{k+1}\circ\r_{k}(x_{k})\circ\b_{k}^{-(k+1)}\circ\b_{k}(v_{k})\otimes...\otimes \b_{n-1}(v_{n-1})\\
&=&\displaystyle\sum^{n-1}_{k=0}\b_{0}(v_{0})\otimes...\otimes
\r_{k}(\alpha^{k+1}(x_{k}))\circ\b_{k}
\otimes...\otimes \b_{n-1}(v_{n-1})\\
&=&\r(\alpha(X))\circ\b(v_{0}\otimes...\otimes v_{n-1}).
\end{eqnarray*}
\end{small}
So $(V,\rho_{\beta}=\beta\circ\rho,\b)$ is a representation of the simple multiplicative Hom-Lie
algebra $(\mathfrak{g},[\cdot,\cdot]_{\alpha},\alpha)$, which is regular if the   representation of the induced Lie algebra is irreducible.
\end{proof}

\begin{prop}
Let $(\mathfrak{g},[\cdot,\cdot]_{\alpha},\alpha)$ and $(\mathfrak{g},[\cdot,\cdot]_{\gamma},\gamma)$ be two isomorphic simple multiplicative
Hom-Lie algebras generated by a simple ideal $\mathfrak{g}_{1}$. Let $(\mathfrak{g},[\cdot,\cdot]_{\alpha}^{'})$ and $(\mathfrak{g},[\cdot,\cdot]_{\gamma}^{'})$
their induced Lie algebras.\\
If $(V,\r_{\b},\b)$ (resp. $(V,\r_{\delta},\delta)$) is an irreducible representation of $(\mathfrak{g},[\cdot,\cdot]_{\alpha},\alpha)$ (resp.
 $(\mathfrak{g},[\cdot,\cdot]_{\gamma},\gamma)$) on the same vector space
 $V$. Then $\delta$ and $\b$ are conjugated
 by $S\in GL(V)$.
\end{prop}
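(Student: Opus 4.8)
The plan is to transport everything to the induced semisimple Lie algebras, where Schur's lemma and the Skolem--Noether theorem become available. First I would record that both representations are irreducible, so by the corollary above (irreducibility forces the twisting map to be invertible) $\beta$ and $\delta$ are invertible; hence Theorem \ref{Keythm}(1) applies and yields irreducible representations $\rho:=\beta^{-1}\circ\rho_{\beta}$ and $\rho':=\delta^{-1}\circ\rho_{\delta}$ of the induced Lie algebras $(\mathfrak{g},[\cdot,\cdot]_{\alpha}')$ and $(\mathfrak{g},[\cdot,\cdot]_{\gamma}')$ respectively. Rewriting the compatibility \eqref{char} for each, the twisting maps implement the twisting automorphisms by conjugation:
\[ \rho(\alpha(x))=\beta\circ\rho(x)\circ\beta^{-1}, \qquad \rho'(\gamma(x))=\delta\circ\rho'(x)\circ\delta^{-1}, \quad \forall x\in\mathfrak{g}. \]
This is the form in which $\beta$ and $\delta$ can be compared.

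Next I would exploit the Hom-Lie isomorphism. Let $\varphi\colon(\mathfrak{g},[\cdot,\cdot]_{\alpha},\alpha)\to(\mathfrak{g},[\cdot,\cdot]_{\gamma},\gamma)$ be the given isomorphism, so that $\varphi\circ\alpha=\gamma\circ\varphi$. A short computation from the definition of the induced bracket (using $[x,y]_{\alpha}'=\alpha^{-1}[x,y]_{\alpha}$) shows that $\varphi$ is simultaneously an isomorphism of the induced Lie algebras $(\mathfrak{g},[\cdot,\cdot]_{\alpha}')\to(\mathfrak{g},[\cdot,\cdot]_{\gamma}')$. I would then transport $\rho'$ across $\varphi$, setting $\rho'':=\rho'\circ\varphi$, which is again an irreducible representation of $(\mathfrak{g},[\cdot,\cdot]_{\alpha}')$ on the same space $V$. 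Combining $\varphi\circ\alpha=\gamma\circ\varphi$ with the intertwining relation for $\delta$ gives
\[ \rho''(\alpha(x))=\rho'(\gamma(\varphi(x)))=\delta\circ\rho''(x)\circ\delta^{-1}, \quad \forall x\in\mathfrak{g}, \]
so that $\delta$ plays, for $\rho''$, exactly the role that $\beta$ plays for $\rho$.

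The comparison is where the work concentrates. Both $\rho$ and $\rho''$ are irreducible representations of the single semisimple Lie algebra $(\mathfrak{g},[\cdot,\cdot]_{\alpha}')$ on $V$; invoking the tensor-product description of Proposition \ref{prop4.5} together with Theorem \ref{thmbase} (each is built from representations of the generating ideal $\mathfrak{g}_{1}$), I would produce $P\in GL(V)$ with $P\circ\rho(x)\circ P^{-1}=\rho''(x)$ for all $x$. Setting $Q:=P^{-1}\circ\delta\circ P$ and using the two intertwining identities, one computes $Q\circ\rho(x)\circ Q^{-1}=\rho(\alpha(x))=\beta\circ\rho(x)\circ\beta^{-1}$, so $\beta^{-1}Q$ commutes with the irreducible family $\rho(\mathfrak{g})$. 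By Schur's lemma (the field being algebraically closed) $\beta^{-1}Q=\lambda\,\mathrm{Id}$ for some $\lambda\in\mathbb{K}^{\times}$, whence $\delta=\lambda\,P\beta P^{-1}$; taking $S=P$ this conjugates $\delta$ to $\beta$ up to the scalar $\lambda$.

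The main obstacle, requiring genuine care, is twofold. First, one must justify the intertwiner $P$, that is, that $\rho$ and $\rho''$ are actually \emph{isomorphic} $(\mathfrak{g},[\cdot,\cdot]_{\alpha}')$-modules and not merely equidimensional irreducibles; here the hypothesis that both Hom-Lie algebras are generated by the same simple ideal $\mathfrak{g}_{1}$, and the conjugacy of $\alpha$ and $\gamma$ supplied by Theorem \ref{simplethm}(2), must be used to match the tensor factors. Second, one must dispose of the scalar $\lambda$ to reach genuine conjugacy $\delta=S\beta S^{-1}$: since conjugation cannot rescale eigenvalues, $\lambda$ has to be pinned down. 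I would do this through the Skolem--Noether normalisation already used in Proposition \ref{prop4.4}, where each twisting map is fixed by the relation $S=\beta^{k}$, or by a determinant/eigenvalue comparison, forcing $\lambda$ to be a root of unity that can be absorbed in the rescaling of $P$. This normalisation is the delicate point; the remainder of the argument is formal.
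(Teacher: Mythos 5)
Your route is the same as the paper's: transport $\rho_{\delta}$ across the Hom-Lie isomorphism $\varphi$, produce an intertwiner between the two induced irreducible representations of $(\mathfrak{g},[\cdot,\cdot]_{\alpha}^{'})$ (the paper obtains its $S$ from the asserted equivalence plus Skolem--Noether, which is exactly your $P$), and then compare the two ways in which $\alpha$ is implemented by conjugation. Your computation is correct up to and including $\delta=\lambda\, P\circ\beta\circ P^{-1}$; this is precisely what the paper's comparison of \eqref{egalite1} and \eqref{egalite2} actually yields, except that the paper silently takes $\lambda=1$ and writes $S\circ\beta=\delta\circ S$.

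Both obstacles you flag are real, and both are passed over in the paper: the equivalence of $\rho$ and $\rho''$ is asserted rather than proved (two irreducible representations of a semisimple Lie algebra on the same underlying space need not be isomorphic), and the Schur scalar is never mentioned. However, your proposed disposal of $\lambda$ does not work: replacing $P$ by $\mu P$ leaves $P\circ\beta\circ P^{-1}$ unchanged, so no rescaling of the intertwiner can absorb $\lambda$, and nothing forces $\lambda$ to be a root of unity. In fact the scalar is a genuine obstruction to the statement as written: if $(V,\rho_{\delta},\delta)$ is an irreducible representation of $(\mathfrak{g},[\cdot,\cdot]_{\gamma},\gamma)$, then so is $(V,c\rho_{\delta},c\delta)$ for any $c\neq 0$, since both defining identities \eqref{ss} and \eqref{sss} are homogeneous of degree two in the pair $(\rho_{\delta},\delta)$, and $c\delta$ is in general not conjugate to $\delta$ (compare eigenvalues). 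So the conclusion actually obtainable by this method --- yours and the paper's alike --- is that $\delta$ is conjugate to $\beta$ only up to a nonzero scalar; you have located a gap that is already present in the published proof rather than introduced a new one.
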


\begin{proof}
Since $(\mathfrak{g},[\cdot,\cdot]_{\alpha},\alpha)$ and
$(\mathfrak{g},[\cdot,\cdot]_{\gamma},\gamma)$ are isomorphic, then there exists a linear automorphism $\varphi$ of $\mathfrak{g}$
 such that $\varphi \circ \alpha=\gamma\circ\varphi$ and $\varphi([x,y]_\alpha)=[\varphi(x),\varphi(y)]_\gamma$. On the induced Lie algebras $(\mathfrak{g},[\cdot,\cdot]_{\alpha}^{'})$ and $(\mathfrak{g},[\cdot,\cdot]_{\gamma}^{'})$, $\varphi$ is still an isomorphism of semisimple Lie algebras that is, it satisfies $\varphi([x,y]'_\alpha)=[\varphi(x),\varphi(y)]'_\gamma$.

Let $(V,\r_{\b},\b)$ and $(V,\r_{\delta},\delta)$ be two representations on the vector space $V$ of respectively $(\mathfrak{g},[\cdot,\cdot]_{\alpha},\alpha)$ and $(\mathfrak{g},
[\cdot,\cdot]_{\gamma},\gamma)$.
The corresponding induced representations $(V,\r_{\alpha}^{'})$
and $(V,\r_{\gamma}^{'})$  respectively of $(\mathfrak{g},[\cdot,\cdot]_{\alpha}^{'})$ and $(\mathfrak{g},[\cdot,\cdot]_{\gamma}^{'})$ are equivalent. So there exists  a linear automorphism $T$ of $End(V)$ such that $\r_{\gamma}^{'}\circ \varphi
=T\circ\r_{\alpha}^{'}$. By Skolem-Noether Theorem \cite{B}, there exists an $S\in GL(V)$ such that $T=Ad_{S}$. Therefore,
\begin{equation}\label{skolamn}
\forall x\in \mathfrak{g}, \r_{\gamma}^{'}(\varphi(x))=
S\circ\r_{\alpha}^{'}(x)\circ S^{-1}.
\end{equation}
Using Theorem \ref{Keythm}, Theorem \ref{simplethm} and Proposition \ref{alphabeta},
\begin{eqnarray*}
\delta^{2}\circ\r_{\gamma}^{'}(\varphi(x))\circ\delta^{-1}&=&\r_{\delta}(\gamma(\varphi(x)))
=\r_{\delta}(\varphi(\alpha(x)))
=\delta\circ\r_{\gamma}^{'}(\varphi(\alpha(x)))
=\delta\circ S\circ\r_{\alpha}^{'}(\alpha(x))\circ S^{-1}\\
&=&\delta\circ S\circ \beta \circ \r^{'}_{\alpha}(x)\circ \b^{-1}\circ S^{-1}.
\end{eqnarray*}
So we get
\begin{equation}\label{egalite1}
\delta^{2}\circ\r_{\gamma}^{'}(\varphi(x))=
\delta\circ S\circ \beta \circ \r^{'}_{\alpha}(x)\circ \b^{-1}\circ S^{-1}.
\end{equation}
Moreover, using (\ref{skolamn}) we get
\begin{equation}\label{egalite2}
 \delta^{2}\circ\r_{\gamma}^{'}(\varphi(x))=\delta^{2}\circ S\circ\r_{\alpha}^{'}(x)\circ S^{-1}
 \circ\delta^{-1}.
\end{equation}
Comparing (\ref{egalite1}) and (\ref{egalite2}), we will obtain $S\circ\b=\delta\circ S$.
Therefore $\delta=S\circ\b\circ S^{-1}$.
\end{proof}

\section{Weight modules of  Simple multiplicative Hom-Lie algebras}We introduce and discuss in the following the root space decomposition of simple multiplicative Hom-Lie algebras,  weight modules and Verma modules.
\subsection{Root space decomposition of simple multiplicative Hom-Lie algebras}
Let $(\mathfrak{g},[\cdot,\cdot]_{\alpha},\alpha)$ be a simple multiplicative Hom-Lie algebra and let $\mathfrak{g}_{1}$ be a simple ideal generating the induced Lie algebra $(\mathfrak{g},[\cdot,\cdot])$ such that $\mathfrak{g}=\mathfrak{g_1}\oplus \alpha(\mathfrak{g_1})\ldots \oplus\alpha^{n-1}(\mathfrak{g_1})$.\\
Let $\mathfrak{h}_{1}$
be a Cartan subalgebra of $\mathfrak{g}_{1}$ (we write CSA for abbreviation).
The decomposition
of $\mathfrak{g}_{1}$ into root spaces relatively to $\mathfrak{h}_{1}$ is given by
 $\mathfrak{g}_{1}=\mathfrak{h}_{1}\oplus
\underset{\eta\in\Delta_{1}}\bigoplus (\mathfrak{g}_{1})_{\eta},$ where $\Delta_{1}$ is the set of roots.
Since $\alpha$ is an automorphism of $\mathfrak{g}$. For all
$\eta\in(\mathfrak{g}_{1})_{\eta}, x\in\mathfrak{g}_{1}$ and $h\in \mathfrak{h}_{1}$,
we have
 $\alpha^{k}([h,x])=\eta(h)\alpha^{k}(x),\\ \forall 0\leq k\leq n-1$. Namely,
$[\alpha^{k}(h),\alpha^{k}(x)]=\eta(h)\alpha^{k}(x)$ and so
$[\alpha^{k}(h),\alpha^{k}(x)]=\eta\circ\alpha^{-k}(\alpha^{k}(h))\alpha^{k}(x)$. Then,
$\alpha^{k}((\mathfrak{g}_{1})_{\eta})$ is a root space of $\alpha^{k}(\mathfrak{g}_{1})$
and we have
$\alpha^{k}((\mathfrak{g}_{1})_{\eta})=(\alpha^{k}(\mathfrak{g}_{1}))_{\eta\circ
\alpha^{-k}}$. The set of roots of $\alpha^{k}(\mathfrak{g}_{1})$
is given by $\Delta_{k+1}=\Delta_{1}\circ\alpha^{-k}
=\{\eta\circ\alpha^{-k}, \eta\in \Delta_{1}\}$.
\begin{rem}
If $\alpha^{n}$ is an outer automorphism, then $\alpha^{n}(\mathfrak{h}_{1})=\mathfrak{h}_{1}$, otherwise, $\alpha^{n}(\mathfrak{h}_{1})\neq\mathfrak{h}_{1}$.
\end{rem}
A consequence of the discussion above is the following proposition.
\begin{prop}
Let $\mathfrak{g}=\mathfrak{g}_1\oplus \alpha(\mathfrak{g_1})\ldots \oplus\alpha^{n-1}(\mathfrak{g_1})$ be the induced Lie algebra of a simple
multiplicative Hom-Lie algebra $(\mathfrak{g},[\cdot,\cdot]_{\alpha},\alpha).$ Then\begin{itemize}
\item[(1)]
$\mathfrak{h}=\mathfrak{h}_{1}\oplus \alpha(\mathfrak{h}_{1})\oplus...\oplus
 \alpha^{n-1}(\mathfrak{h}_{1})$ is a CSA of $\mathfrak{g}$ and $\Delta=\displaystyle
 \bigsqcup_{k=0}^{n-1} \Delta_{k+1}$ is the set of roots
  with respect to $\mathfrak{h}$ and the root space decomposition is given by
  $\mathfrak{g}=\mathfrak{h}\oplus\displaystyle\bigoplus^{n-1}_{k=0}\underset{\eta\in
  \Delta_{k+1}}\bigoplus\alpha^{k}(\mathfrak{g}_{1})_{\eta}.$
  \item[(2)]Let $\gamma$ be another automorphism such that $\mathfrak{g}=\mathfrak{g_1}\oplus \gamma(\mathfrak{g_1})\ldots \oplus\gamma^{n-1}(\mathfrak{g_1}), \gamma^{n}(\mathfrak{g}_{1})=\mathfrak{g}_{1}$. Then, there  exists
$\varphi\in Aut(\mathfrak{g})$ such that $\varphi\circ\alpha=\gamma\circ\varphi,
 (\gamma=\varphi\circ\alpha\circ\varphi^{-1})$ and \\ 
 $\gamma^{k}(\mathfrak{g}_{1})=\gamma^{k}(\mathfrak{h}_{1})\oplus \underset{\eta\in\Delta_{1}\circ\gamma^{-k}}\bigoplus(\gamma^{k}(\mathfrak{g}_{1}))_{\eta}$. \\ Set
$\Delta^{'}_{k+1}=\Delta_{1}\circ\gamma^{-k}=\Delta_{1}\circ\varphi\circ\alpha^{-k}
\circ\varphi^{-1}$ and $\mathfrak{h}^{'}=\mathfrak{h}_{1}\oplus\gamma(\mathfrak{h}_{1})
\oplus...\oplus\gamma^{n-1}(\mathfrak{h}_{1})$.\\  Hence $\mathfrak{g}=\mathfrak{h}\oplus(\displaystyle \bigoplus_{k=0}^{n-1}
\underset{\eta\in\Delta_{k+1}^{'}}\bigoplus(\gamma^{k}(\mathfrak{g}_{1}))_{\eta})$
is the root space decomposition of $\mathfrak{g}$ with respect to $\mathfrak{h}^{'}.$
  \end{itemize}
\end{prop}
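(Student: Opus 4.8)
The plan is to reduce both parts to the decomposition $\mathfrak{g}=\bigoplus_{k=0}^{n-1}\alpha^{k}(\mathfrak{g}_{1})$ of $\mathfrak{g}$ into mutually commuting simple ideals furnished by Theorem \ref{simplethm}, together with the two identities $\alpha^{k}((\mathfrak{g}_{1})_{\eta})=(\alpha^{k}(\mathfrak{g}_{1}))_{\eta\circ\alpha^{-k}}$ and $\Delta_{k+1}=\Delta_{1}\circ\alpha^{-k}$ already established in the discussion preceding the statement. The only genuinely new ingredient is the standard gluing of Cartan subalgebras and root spaces across a direct sum of ideals.

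First I would prove $(1)$. Since $\alpha$ is an automorphism of the induced Lie algebra, the restriction $\alpha^{k}\colon\mathfrak{g}_{1}\to\alpha^{k}(\mathfrak{g}_{1})$ is a Lie algebra isomorphism, so $\alpha^{k}(\mathfrak{h}_{1})$ is a CSA of the simple ideal $\alpha^{k}(\mathfrak{g}_{1})$. Because distinct simple ideals of a semisimple Lie algebra commute, $[\alpha^{j}(\mathfrak{h}_{1}),\alpha^{k}(\mathfrak{g}_{1})]=0$ for $j\neq k$, whence $\mathfrak{h}=\bigoplus_{k}\alpha^{k}(\mathfrak{h}_{1})$ is a maximal toral, hence Cartan, subalgebra of $\mathfrak{g}$, being a direct sum of CSAs of the commuting ideals. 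For a root vector $x\in\alpha^{k}(\mathfrak{g}_{1})_{\eta}$ and $h=\sum_{j}h_{j}\in\mathfrak{h}$ with $h_{j}\in\alpha^{j}(\mathfrak{h}_{1})$, the commuting of the ideals gives $[h,x]=[h_{k},x]=\eta(h_{k})\,x$, so $x$ is a root vector of $\mathfrak{g}$ for the functional on $\mathfrak{h}$ obtained by extending $\eta$ by zero on the remaining summands. Letting $k$ and $\eta$ run yields $\mathfrak{g}=\mathfrak{h}\oplus\bigoplus_{k=0}^{n-1}\bigoplus_{\eta\in\Delta_{k+1}}\alpha^{k}(\mathfrak{g}_{1})_{\eta}$ with $\Delta_{k+1}=\Delta_{1}\circ\alpha^{-k}$; the union $\Delta=\bigsqcup_{k}\Delta_{k+1}$ is genuinely disjoint because two such extended functionals supported on different summands, each nonzero on its own summand, can never coincide.

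For $(2)$ I would first produce $\varphi$. Both $\alpha$ and $\gamma$ exhibit $\mathfrak{g}$ as $n$ cyclically permuted copies of $\mathfrak{g}_{1}$, and $\alpha^{n}$, $\gamma^{n}$ restrict to automorphisms of $\mathfrak{g}_{1}$; by Theorem \ref{simplethm}$(2)$ these restrictions are conjugate, say $\psi\circ(\alpha^{n}\vert_{\mathfrak{g}_{1}})=(\gamma^{n}\vert_{\mathfrak{g}_{1}})\circ\psi$ for some $\psi\in\mathrm{Aut}(\mathfrak{g}_{1})$. I would then define $\varphi$ factorwise by $\varphi\vert_{\alpha^{k}(\mathfrak{g}_{1})}=\gamma^{k}\circ\psi\circ\alpha^{-k}$ for $0\leq k\leq n-1$; since the $\alpha^{k}(\mathfrak{g}_{1})$ are the distinct simple ideals, this determines a linear bijection $\mathfrak{g}\to\mathfrak{g}$ carrying the ideal system $\{\alpha^{k}(\mathfrak{g}_{1})\}$ onto $\{\gamma^{k}(\mathfrak{g}_{1})\}$, hence a Lie algebra automorphism. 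Checking $\varphi\circ\alpha=\gamma\circ\varphi$ is immediate on $\alpha^{k}(\mathfrak{g}_{1})$ for $k<n-1$, while on $\alpha^{n-1}(\mathfrak{g}_{1})$ it comes down exactly to the intertwining relation $\psi\circ\alpha^{n}=\gamma^{n}\circ\psi$ on $\mathfrak{g}_{1}$, the wrap-around at $k=n-1$. Thus $\gamma=\varphi\alpha\varphi^{-1}$.

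Finally, applying part $(1)$ verbatim with $\gamma$ in place of $\alpha$ shows that $\mathfrak{h}'=\bigoplus_{k}\gamma^{k}(\mathfrak{h}_{1})$ is a CSA of $\mathfrak{g}$ and that $\mathfrak{g}=\mathfrak{h}'\oplus\bigoplus_{k=0}^{n-1}\bigoplus_{\eta\in\Delta'_{k+1}}(\gamma^{k}(\mathfrak{g}_{1}))_{\eta}$ is its root space decomposition, where $\Delta'_{k+1}=\Delta_{1}\circ\gamma^{-k}$; substituting $\gamma^{-k}=\varphi\alpha^{-k}\varphi^{-1}$ rewrites $\Delta'_{k+1}=\Delta_{1}\circ\varphi\circ\alpha^{-k}\circ\varphi^{-1}$, as claimed. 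I expect the main obstacle to be precisely the construction of $\varphi$: everything else is transport of structure, but producing the conjugating automorphism genuinely requires the conjugacy of $\alpha^{n}$ and $\gamma^{n}$ on $\mathfrak{g}_{1}$ from Theorem \ref{simplethm}$(2)$ and the verification that the factorwise definition is compatible with the cyclic action at $k=n-1$.
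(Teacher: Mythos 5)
Your part (1) is correct and is essentially the paper's own argument: the paper offers no written proof beyond the discussion preceding the statement (the identities $\alpha^{k}((\mathfrak{g}_{1})_{\eta})=(\alpha^{k}(\mathfrak{g}_{1}))_{\eta\circ\alpha^{-k}}$ and $\Delta_{k+1}=\Delta_{1}\circ\alpha^{-k}$), and your gluing of the Cartan subalgebras and root spaces across the commuting simple ideals, with roots extended by zero and the disjointness of $\Delta=\bigsqcup_{k}\Delta_{k+1}$ checked, is exactly the routine transport of structure the authors leave implicit.

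In part (2), however, there is a genuine gap at the one point you yourself identify as the crux. You write that ``by Theorem \ref{simplethm}(2) these restrictions are conjugate,'' i.e.\ that $\alpha^{n}\vert_{\mathfrak{g}_{1}}$ and $\gamma^{n}\vert_{\mathfrak{g}_{1}}$ are conjugate in $\mathrm{Aut}(\mathfrak{g}_{1})$. But Theorem \ref{simplethm}(2) is an \emph{equivalence}: $\alpha$ and $\gamma$ are conjugate on $\mathfrak{g}$ if and only if $\alpha^{n}$ and $\gamma^{n}$ are conjugate on $\mathfrak{g}_{1}$. It does not assert unconditionally that the two restrictions are conjugate, and using it in the direction you need presupposes exactly the conclusion you are trying to reach (the existence of $\varphi$). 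Nor is the hypothesis automatic: already for $n=1$ the claim would say that any two automorphisms of a simple Lie algebra are conjugate, which fails (take $\alpha=\mathrm{id}$ and $\gamma$ a nontrivial outer automorphism). So the existence of $\psi$ must either be added as a hypothesis or derived from some additional assumption relating $\alpha$ and $\gamma$ (e.g.\ that the two Hom-Lie algebras they define are isomorphic); as written, your construction of $\varphi$ has no foundation. Everything downstream of that point is fine: the factorwise definition $\varphi\vert_{\alpha^{k}(\mathfrak{g}_{1})}=\gamma^{k}\circ\psi\circ\alpha^{-k}$, the wrap-around check at $k=n-1$ reducing to $\psi\circ\alpha^{n}=\gamma^{n}\circ\psi$ on $\mathfrak{g}_{1}$, and the rewriting $\Delta'_{k+1}=\Delta_{1}\circ\varphi\circ\alpha^{-k}\circ\varphi^{-1}$ are all correct, and indeed amount to a proof of the ``if'' direction of Theorem \ref{simplethm}(2); what is missing is precisely its hypothesis.
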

\subsection{Weight modules of simple multiplicative Hom-Lie algebras}
Let $\mathfrak{g}=\mathfrak{g}_{1}\oplus...\oplus\mathfrak{g}_{n}$ be a semisimple Lie
algebra.  Let $\mathfrak{h}=\mathfrak{h}_{1}\oplus\mathfrak{h}_{2}\oplus...
\oplus\mathfrak{h}_{n}$ be a CSA of $\mathfrak{g}$ and
$\mathfrak{g}=\mathfrak{n}^{+}\oplus\mathfrak{h}\oplus\mathfrak{n}^{-}$ its triangular
decomposition. Let $\Delta$ be the set of roots with respect to $\mathfrak{h}$. Let $\rho:\mathfrak{g}\longrightarrow
End(V)$ be a representation of $\mathfrak{g}$.
Let $\lambda\in\mathfrak{h}^{*}$ and $V_{\lambda}=\{v\in V: \r(h)v=\lambda(h) v,\forall h\in \mathfrak{h}\}$. If $V_{\lambda}\neq \{0\}, \lambda$ is called a weight of $V$
and $V_{\lambda}$ is called a weight subspace of $V$ of weight $\lambda.$
 Denote the set of all weights by $P(V)$.
If $V=\underset{\lambda\in P(V)}\bigoplus V_{\lambda}$ is a direct sum of its weight subspaces then, we say that $V$ is a weight module.
Every weight module of $\mathfrak{g}$ is a sum of modules of the form
$V_{1}\otimes...\otimes V_{n}$ where $V_{i}$ is a module of the simple factor $\mathfrak{g}_{i}$ of $\mathfrak{g}$. Let $V=V_{1}\otimes...\otimes V_{n}$, then the set of weights of $V$ is given by:
$$P(V)=P(V_{0})\oplus...\oplus P(V_{n-1})=\{\displaystyle \sum_{k=1}^{n}\lambda_{i}/
 \lambda_{i}\in P(V_{i})\}$$ and we have
$$V=\underset{(\lambda_{1},...,\lambda_{n})\in P(V)}\oplus V_{\lambda_{1}}\otimes...\otimes V_{\lambda_{n}}.$$
Let $(W,\r)$ be a representation  of $\mathfrak{g}_{1}$. By Proposition \ref{prop4.4},  we associate a representation
$(W,\widetilde{\r})$ of $\alpha^{k}(\mathfrak{g}_{1})$ such that there exists $\b\in GL(W)$
satisfying $\widetilde{\r}(\alpha^{k}(x))=\b^{k}\circ\r_{k}\circ\b^{-k}$.
\begin{prop}\label{weightmodul}
Let $(\mathfrak{g},[\cdot,\cdot]_{\alpha},\alpha)$ be a simple multiplicative Hom-Lie algebra
and  $\mathfrak{g}=\mathfrak{g}_{1}\oplus\alpha(\mathfrak{g}_{1})\oplus\alpha^{2}
(\mathfrak{g}_{1})\oplus...\oplus\alpha^{n-1}(\mathfrak{g}_{1})$ be its induced Lie algebra,
$(\alpha^{n}(\mathfrak{g}_{1})=\mathfrak{g}_{1}).$
\begin{itemize}
\item[(1)]Consider $n$ weight modules $(V_{0},\r_{0}), (V_{1},\r_{1}),...,
(V_{n-1},\r_{n-1})$ of $\mathfrak{g}_{1}$. Then, for all integer   $k=0,\cdots, n-1$, there exists a
representation $(V_{k},\widetilde{\r}_{k})$ of $\alpha^{k}(\mathfrak{g}_{1})$ on $V_{k}$
and $\beta_{k} \in GL(V_{k})$ satisfying $\widetilde{\r}_{k}(\alpha^{k}(x))=\b_{k}^{k}\circ\r_{k}(x)\circ\b_{k}^{-k}$  for all $x\in\mathfrak{g}_{1}.$ Then
$V_{k}$ is a weight module of $\alpha^{k}(\mathfrak{g}_{1})$ and $\b_{k}^{k}((V_{k})_{\lambda})$ is a weight subspace of $V_{k}$ of weight $\lambda\circ\alpha^{-k}.$
\item[(2)]Let $P(V_{i})$ be the set of weights of $V_{i}$ with respect to $\mathfrak{h}_{1}$. The set of weights of $V=\displaystyle \otimes^{n-1}_{i=0}V_{i}$ with respect to $\mathfrak{h}=\mathfrak{h}_{1}\oplus
    \alpha(\mathfrak{h}_{1})\oplus...\oplus\alpha^{n-1}(\mathfrak{h}_{1})$ is given by
 $$P_{\mathfrak{h}}(V)=\Big\{\displaystyle \sum_{k=0}^{n-1}\lambda_{i}\circ\alpha^{-k}/ \lambda_{i}\in P(V_{i})\Big\}.$$
 The weight decomposition of $V$ is given by
 $$V=\underset{\lambda\in P(V)}\oplus\Big((V_{0})_{\lambda_{0}}\otimes(V_{1})_{\lambda_{1}\circ\alpha^{-1}}\otimes...
 \otimes(V_{n-1})_{\lambda_{n-1}\circ\alpha^{1-n}}\Big).$$
 \end{itemize}
\end{prop}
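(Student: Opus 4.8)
The plan is to prove part (1) by a direct eigenvalue computation and part (2) by reducing to the standard weight-space theory for tensor products of semisimple Lie algebra modules recalled just above the statement. First, for part (1), I would fix $k$ and start from the weight decomposition $V_k=\bigoplus_{\lambda\in P(V_k)}(V_k)_\lambda$ of $V_k$ as a weight module of $\mathfrak{g}_1$ relative to $\mathfrak{h}_1$. Given $w\in(V_k)_\lambda$ and $h\in\mathfrak{h}_1$, I would compute, using the defining relation $\widetilde{\rho}_k(\alpha^k(x))=\beta_k^k\circ\rho_k(x)\circ\beta_k^{-k}$ furnished by Proposition \ref{prop4.4},
$$\widetilde{\rho}_k(\alpha^k(h))(\beta_k^k w)=\beta_k^k\,\rho_k(h)\,\beta_k^{-k}\,\beta_k^k w=\beta_k^k\,\rho_k(h)\,w=\lambda(h)\,\beta_k^k w.$$
Since a general element of the CSA $\alpha^k(\mathfrak{h}_1)$ of $\alpha^k(\mathfrak{g}_1)$ has the form $\alpha^k(h)$, and $\lambda(h)=(\lambda\circ\alpha^{-k})(\alpha^k(h))$, this exhibits $\beta_k^k w$ as a weight vector of $\widetilde{\rho}_k$ of weight $\lambda\circ\alpha^{-k}$; hence $\beta_k^k((V_k)_\lambda)\subseteq(V_k)_{\lambda\circ\alpha^{-k}}$.

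To conclude equality and that $V_k$ is a weight module for $\widetilde{\rho}_k$, I would apply $\beta_k^k$ to the decomposition $V_k=\bigoplus_\lambda(V_k)_\lambda$. Since $\beta_k^k\in GL(V_k)$, I obtain $V_k=\bigoplus_\lambda\beta_k^k((V_k)_\lambda)\subseteq\sum_\lambda(V_k)_{\lambda\circ\alpha^{-k}}\subseteq V_k$. Because $\alpha^{-k}$ is a bijection the functionals $\lambda\circ\alpha^{-k}$ are pairwise distinct, so the weight spaces on the right are in direct sum (eigenvectors for distinct eigenvalues being linearly independent). Comparing the two ends, every inclusion is forced to be an equality, which shows simultaneously that $V_k$ is a weight module of $\alpha^k(\mathfrak{g}_1)$ and that $\beta_k^k((V_k)_\lambda)=(V_k)_{\lambda\circ\alpha^{-k}}$.

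For part (2), I would view $V=V_0\otimes\cdots\otimes V_{n-1}$ as the $\mathfrak{g}$-module $(V,\rho)$ of Theorem \ref{thmbase}, so that the summand $\alpha^k(\mathfrak{g}_1)$ acts on the $k$-th factor through $\widetilde{\rho}_k$ and trivially on the others. Writing a general $H\in\mathfrak{h}=\bigoplus_k\alpha^k(\mathfrak{h}_1)$ as $H=\sum_k\alpha^k(h_k)$ with $h_k\in\mathfrak{h}_1$, and taking a pure tensor $v_0\otimes\cdots\otimes v_{n-1}$ with $v_k\in(V_k)_{\lambda_k\circ\alpha^{-k}}=\beta_k^k((V_k)_{\lambda_k})$, I would compute from \eqref{simplerep} that
$$\rho(H)(v_0\otimes\cdots\otimes v_{n-1})=\Big(\sum_{k=0}^{n-1}(\lambda_k\circ\alpha^{-k})(\alpha^k(h_k))\Big)\,v_0\otimes\cdots\otimes v_{n-1}=\Big(\sum_{k=0}^{n-1}\lambda_k(h_k)\Big)\,v_0\otimes\cdots\otimes v_{n-1}.$$
This identifies the functional $\Lambda\in\mathfrak{h}^*$ determined by $\Lambda(H)=\sum_k(\lambda_k\circ\alpha^{-k})(\alpha^k(h_k))$, that is $\Lambda=\sum_k\lambda_k\circ\alpha^{-k}$ (each summand viewed as vanishing on $\alpha^j(\mathfrak{h}_1)$ for $j\neq k$), as a weight of $V$. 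Invoking the recalled fact that a tensor product of weight modules of the simple factors is itself a weight module of the semisimple Lie algebra, with weights the sums of the factor weights and weight spaces the corresponding tensor products, I would then read off $P_{\mathfrak{h}}(V)=\{\sum_k\lambda_k\circ\alpha^{-k}:\lambda_k\in P(V_k)\}$ together with the stated decomposition $V=\bigoplus_\lambda(V_0)_{\lambda_0}\otimes(V_1)_{\lambda_1\circ\alpha^{-1}}\otimes\cdots\otimes(V_{n-1})_{\lambda_{n-1}\circ\alpha^{1-n}}$.

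The computations here are entirely routine; the only point demanding care, and the place I would expect to be the main obstacle, is the consistent bookkeeping of the linear functionals across the summands of $\mathfrak{h}$: one must keep straight that $\lambda_k\circ\alpha^{-k}$ is a priori a functional on $\alpha^k(\mathfrak{h}_1)$, extend it by zero to all of $\mathfrak{h}$, and verify that the resulting $\Lambda$ is well defined and that distinct tuples $(\lambda_0,\dots,\lambda_{n-1})$ yield weight spaces in genuine direct sum, which is precisely what upgrades $V$ from a mere sum of weight vectors to a weight module.
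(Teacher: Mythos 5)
Your proposal is correct and follows essentially the same route as the paper: the identical eigenvalue computation $\widetilde{\rho}_k(\alpha^k(h))(\beta_k^k v)=(\lambda\circ\alpha^{-k})(\alpha^k(h))\,\beta_k^k v$ for part (1), transport of the decomposition through the invertible $\beta_k^k$, and then the standard tensor-product weight theory (recalled just before the statement) for part (2). You merely spell out two points the paper leaves implicit, namely the forcing of equalities from the chain of inclusions and the explicit action of $H=\sum_k\alpha^k(h_k)$ on pure tensors, which is a welcome but not substantively different elaboration.
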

\begin{proof}
Let $(V_{k},\r_{k})$ be a weight module of $\mathfrak{g}_{1}$. Then $V_{k}=\underset{\lambda\in P(V_{k})}\bigoplus(V_k)_{\lambda}$,
where $(V_{k})_{\lambda}=\{v\in V_{k}/\r_{k}(h)v_{\lambda}=\lambda(h)v, h\in \mathfrak{h}_{1}\}$. By Proposition \ref{prop4.4}, there exists $\widetilde{\r}_{k}$ and $\b_{k}\in GL(V_k)$ such that for $h\in \mathfrak{h}_{1}, v\in (V_{k})_{\lambda}$, w have 
\begin{eqnarray*}
\widetilde{\r}_{l}(\alpha^{k}(h))\b_{k}^{k}(v)=\b_{k}^{k}\r(h)v\
=\b_{k}^{k}(\lambda(h)v)
=\lambda(h)\b_{k}^{k}(v)
=\lambda\circ\alpha^{-k}(\alpha^{k}(h))
(\b_{k}^{k}(v)).
\end{eqnarray*}
So $\b_{k}^{k}((V_{k})_{\lambda})=(V_{k})_{\lambda}$ is a weight subspace of $\b^{k}_{k}(V_{k})=V_{k}$
with respect to the CSA $\alpha^{k}(\mathfrak{h}_{1})$ of $\alpha^{k}(\mathfrak{g}_{1})$
of weight $\lambda\circ\alpha^{-k}$.
Moreover we have, $\b^{k}_{k}((V_{k})_{\lambda})=(\b_{k}^{k}(V_{k}))_{\lambda\circ\alpha^{-k}}$ and
\begin{eqnarray*}
V_{k}=\b^{k}_{k}(V_k)
=\underset{\lambda\in P(V_k)}\bigoplus \b^{k}_{k}((V_k)_{\lambda})
=\underset{\lambda\in P(V_k)}\bigoplus(\b^{k}_{k}(V_{k}))_{\lambda\circ\alpha^{-k}}
=\underset{\lambda\in P(V_k)}\bigoplus (V_{k})_{\lambda\circ\alpha^{-k}}.
\end{eqnarray*}
Thus, a weight decomposition of $V$ is given by
$$V=\underset{\lambda\in P(V)}\bigoplus\Big((V_{0})_{\lambda_{0}}\otimes(V_{1})_{\lambda_{1}\circ\alpha^{-1}}\otimes...
 \otimes(V_{n-1})_{\lambda_{n-1}\circ\alpha^{1-n}}\Big).$$
\end{proof}
\begin{rem} Since $\alpha^{n}$ is an automorphism
of $\mathfrak{g}_{1}$, if $\alpha^{n}$ is an outer automorphism then
 $\alpha^{n}(\mathfrak{h}_{1})=\mathfrak{h}_{1}$ and $\beta^{n}(V_{\lambda_{k}})=
V_{\lambda_{k}}, 0\leq k\leq n-1.$

If $\alpha^{n}$ is an inner automorphism then $\alpha^{n}(\mathfrak{h}_{1})
\neq\mathfrak{h}_{1}$ and $\beta^{n}(V_{\lambda_{k}})=
V_{\lambda_{k}\circ\alpha^{-n}}$.
\end{rem}
\begin{defns}
\begin{itemize}
\item[(1)]
We call a module  of a  simple multiplicative Hom-Lie algebra a weak weight module if
it is a weight module of the induced semisimple Lie algebra.
We call a weak weight subspace of a module of a simple multiplicative Hom-Lie algebra, every weight subspace
of the weight module of the induced Lie algebra.
\item[(2)] A strong weight module $(V,\r_{\b},\b)$ of $(\mathfrak{g},[\cdot,\cdot]_{\alpha},\alpha)$ is a weight module of $(\mathfrak{g},[\cdot,\cdot])$ such that $\b$ transforms  weight subspaces to
     weight subspaces.
    \end{itemize}
    \end{defns}
Let $\mathfrak{g}$ be a simple Lie algebra. Then, every irreducible finite dimensional
 module of $\mathfrak{g}$
is a highest weight module and its highest weight is a dominant weight.
Let $P^{+}$ be the set of dominant weights of $\mathfrak{g}$ and $\lambda\in P^{+}$. Denote
such irreducible module by $V(\lambda)$ and $v_{\lambda}$ its highest weight vector.
\begin{prop}
Keeping the same hypothesis as in Proposition \ref{weightmodul} and let $(V,\r_{\b},\b)$
be a finite dimensional irreducible module of $(\mathfrak{g},[\cdot,\cdot]_{\alpha},\alpha)$. Then $(V,\r_{\b},\b)$ is a weak weight module. \\ Moreover, there exists $(\lambda_{0},\lambda_{1},...,\lambda_{n-1})$ dominant weights for $\mathfrak{g}_{1}$
such that $$V=V(\lambda_{0})\otimes V(\lambda_{1}\circ\alpha^{-1})\otimes...\otimes
 V(\lambda_{n-1}\circ\alpha^{-(n-1)})$$ is an irreducible $\mathfrak{g}$-weight module of
 highest weight $(\lambda_{0},\lambda_{1}\circ\alpha^{-1},...,\lambda_{n-1}\circ\alpha^{1-n})$.\\
Furthermore, there exists $\b_{0}\in GL(V(\lambda_{0})),\b_{1}\in GL(V(\lambda_{1})),...,\b_{n-1}
\in GL(V(\lambda_{n-1}))$ such that $\b^{k}_{k}(V(\lambda_{k}))=V(\lambda_{k}\circ\alpha^{-k}).$
\end{prop}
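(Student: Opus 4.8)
The plan is to pass from the Hom-Lie module to its induced Lie module and read off all the structure there. First I would observe that, since $(V,\rho_{\beta},\beta)$ is irreducible, the corollary of Section 4 forces $\beta$ to be invertible; hence Theorem \ref{Keythm}(1) applies and $(V,\rho:=\beta^{-1}\circ\rho_{\beta})$ is a representation of the induced semisimple Lie algebra $(\mathfrak{g},[\cdot,\cdot])$, whose decomposition $\mathfrak{g}=\mathfrak{g}_{1}\oplus\alpha(\mathfrak{g}_{1})\oplus\cdots\oplus\alpha^{n-1}(\mathfrak{g}_{1})$ is provided by Theorem \ref{simplethm}. Because $V$ is finite dimensional and $\mathfrak{g}$ is semisimple, the elements of the Cartan subalgebra $\mathfrak{h}=\mathfrak{h}_{1}\oplus\cdots\oplus\alpha^{n-1}(\mathfrak{h}_{1})$ act by commuting semisimple operators and are therefore simultaneously diagonalizable, so $(V,\rho)$ decomposes into weight spaces. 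This already yields the first assertion: $(V,\rho_{\beta},\beta)$ is a weak weight module in the sense of the definition above, and this step needs nothing beyond finite dimensionality.

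Next I would establish the tensor factorisation. The key input is Proposition \ref{prop4.5}: a finite dimensional irreducible module of the semisimple Lie algebra $\mathfrak{g}=\bigoplus_{k=0}^{n-1}\alpha^{k}(\mathfrak{g}_{1})$ is a tensor product $V_{0}\otimes\cdots\otimes V_{n-1}$ of irreducible modules $V_{k}$ over the simple factors $\alpha^{k}(\mathfrak{g}_{1})$. Each factor $\alpha^{k}(\mathfrak{g}_{1})$ is isomorphic to $\mathfrak{g}_{1}$ via $\alpha^{k}$, and a finite dimensional irreducible module of a simple Lie algebra is a highest weight module with dominant highest weight; transporting that weight along $\alpha^{-k}$ gives a dominant weight $\lambda_{k}$ of $\mathfrak{g}_{1}$ with $V_{k}=V(\lambda_{k}\circ\alpha^{-k})$. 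This produces the claimed decomposition $V=V(\lambda_{0})\otimes V(\lambda_{1}\circ\alpha^{-1})\otimes\cdots\otimes V(\lambda_{n-1}\circ\alpha^{-(n-1)})$ with highest weight $(\lambda_{0},\lambda_{1}\circ\alpha^{-1},\ldots,\lambda_{n-1}\circ\alpha^{1-n})$. The operators $\beta_{k}$ then come straight from Proposition \ref{prop4.4} applied to $(V(\lambda_{k}),\rho_{k})$: there exist $\beta_{k}\in GL(V(\lambda_{k}))$ realising $\widetilde{\rho}_{k}(\alpha^{k}(x))=\beta_{k}^{k}\circ\rho_{k}(x)\circ\beta_{k}^{-k}$, and the weight-space computation of Proposition \ref{weightmodul} shows $\beta_{k}^{k}(V(\lambda_{k}))=V(\lambda_{k}\circ\alpha^{-k})$, with $\beta$ recovered (up to the compatibility relation \eqref{char}) as $\beta_{0}\otimes\cdots\otimes\beta_{n-1}$.

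The main obstacle is the step I quietly used: to invoke Proposition \ref{prop4.5} one must know that the induced Lie module $(V,\rho)$ is \emph{irreducible}, and this does not follow formally from irreducibility of $(V,\rho_{\beta},\beta)$. A Hom-Lie submodule must be invariant under both $\rho(\mathfrak{g})$ and $\beta$, which is strictly stronger than $\rho$-invariance alone, so a priori $(V,\rho)$ could be a proper sum of $\rho$-irreducibles that $\beta$ merely permutes. The way I would close this gap is to exploit the intertwining relation \eqref{char}, which makes $\beta$ an isomorphism of $(V,\rho)$ onto its $\alpha$-twist: consequently $\beta$ permutes the isotypic components of $(V,\rho)$, the span of a single $\beta$-orbit of such components is invariant under both $\beta$ and $\rho(\mathfrak{g})$, hence is a nonzero Hom-Lie submodule and therefore equals $V$. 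Combining this with the simply transitive action of $\alpha$ on the simple ideals furnished by Theorem \ref{simplethm} should pin down the orbit structure so that $V$ matches a single tensor product. Making this reduction fully airtight, in particular ruling out higher multiplicities inside an isotypic component, is the delicate part of the argument and is where I expect the real work to lie.
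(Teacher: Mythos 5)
Your proposal runs in the opposite direction from the paper's own proof, and the difference matters. The paper argues synthetically: it starts from $n$ prescribed highest weight modules $V(\lambda_{k})$ of $\mathfrak{g}_{1}$, uses Proposition \ref{prop4.4} to check that $\beta_{k}^{k}(v_{\lambda_{k}})$ is killed by $\widetilde{\rho}_{k}(\alpha^{k}(\mathfrak{n}_{1}^{+}))$ and hence is a highest weight vector of weight $\lambda_{k}\circ\alpha^{-k}$ for $\alpha^{k}(\mathfrak{g}_{1})$, tensors the factors together, and sets $\beta=\beta_{0}\otimes\cdots\otimes\beta_{n-1}$, $\rho_{\beta}=\beta\circ\rho$. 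It never shows that an arbitrary finite-dimensional irreducible Hom-module must arise this way, which is what the proposition actually asserts. Your analytic route (invertibility of $\beta$ from the corollary, Theorem \ref{Keythm}(1), simultaneous diagonalizability of $\rho(\mathfrak{h})$ in finite dimension, then Proposition \ref{prop4.5} and Proposition \ref{prop4.4}) is the right shape for the stated claim, and your first assertion (weak weight module) is sound.

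The gap you flag -- that Hom-irreducibility of $(V,\rho_{\beta},\beta)$ does not give irreducibility of the induced Lie module $(V,\rho)$ -- is genuine, the paper does not address it either, and in fact it cannot be closed with the stated hypotheses. Your orbit argument only shows that $V$ is spanned by a single $\beta$-orbit of isotypic components, which is strictly weaker than being a single irreducible. Concretely, take $n=1$, $\mathfrak{g}=\mathfrak{g}_{1}=\mathfrak{sl}_{3}$ and $\alpha$ the outer automorphism $x\mapsto -x^{T}$, so that $(\mathfrak{g},\alpha([\cdot,\cdot]),\alpha)$ is a simple multiplicative Hom-Lie algebra. Let $V=V(\mu)\oplus V(\mu)^{*}$ with $\mu$ not self-dual; since the $\alpha$-twist of $V(\mu)$ is $V(\mu)^{*}$ and vice versa, there is a $\beta$ swapping the two summands and satisfying \eqref{char}. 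Every Hom-submodule is a $\beta$-stable Lie submodule, and the only proper nonzero Lie submodules are the two non-isomorphic summands, which $\beta$ exchanges; so $(V,\rho_{\beta},\beta)$ is irreducible, yet $(V,\rho)$ is reducible and is not of the form $V(\lambda_{0})$. The conclusion of the proposition therefore requires the stronger hypothesis that the module is \emph{regular} in the paper's sense (the induced Lie module is irreducible), which is what the paper's synthetic construction implicitly supplies. Under that hypothesis your argument goes through directly and the delicate orbit and multiplicity analysis you anticipated becomes unnecessary.
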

\begin{proof}
Let us assume that $V(\lambda_{k})$ is a highest weight module of highest weight $\lambda_{k}\in\mathfrak{h}_{1}^{*}$ of $\mathfrak{g}_{1}$ and highest weight vector $v_{\lambda_{k}}, (v_{\lambda_{k}} \neq 0)$. Since $\alpha^{k}(\mathfrak{g}_{1})
=\alpha^{k}(\mathfrak{n}_{1}^{+})\oplus\alpha^{k}(\mathfrak{h}_{1})
\oplus\alpha^{k}(\mathfrak{n}_{1}^{-})$, by Proposition \ref{prop4.4} we have for all $x\in
\mathfrak{n}^{+},~~\widetilde{\r}_{k}(\alpha^{k}(x))\circ\b^{k}_{k}(v_{\lambda_k})
=\b^{k}_{k}\circ\r_{k}(x)(v_{\lambda_{k}})=\b_{k}^{k}(\overrightarrow{0})=
\overrightarrow{0}$.\\So $\b^{k}_{k}(v_{\lambda_k})$ is a highest weight
vector of highest weight $\lambda_{k}\circ\alpha^{-k}$ of $\b^{k}_{k}(V(\lambda_k))=V(\lambda_k\circ\alpha^{-k})$
as $\alpha^{k}(\mathfrak{g}_{1})$-module and we have $(v_{\lambda_{0}}\otimes\b_{1}(v_{\lambda_1})\otimes...\otimes \b^{n-1}_{n-1}(v_{\lambda_{n-1}}))$ is a highest weight vector of $V$ considered as $\mathfrak{g}$-module.\\
Let $\b=\b_{0}\otimes...\otimes\b_{n-1}$ and set $\r_{\b}:=\b\circ\r,$ where $\r$ is the representation of the induced semisimple Lie algebra. Then, $(V,\r_{\b},\b)$ is a representation of the simple multiplicative Hom-Lie algebra $(\mathfrak{g}, [\cdot,\cdot]_{\alpha},\alpha)$. It turns out that  $\b_{k}$ do not necessary transforms  weight subspaces
to weight subspaces. If it is the case then the weight module of the induced Lie algebra
becomes weight module for the multiplicative Hom-Lie algebra
$(\mathfrak{g},[\cdot,\cdot]_{\alpha},\alpha)$. The same holds for highest
weight modules.
\end{proof}

Let $M(\lambda)$ be a Verma module of $\mathfrak{g}$ of highest weight $\lambda\in P$.
If there exists $\alpha\in \Delta_{+}$ (the set of positive roots) such that
$\lambda(\alpha)\in \mathbb{N}$,  then $M(\lambda)$ is reducible
and there exists a maximal submodule $\overline{M(\lambda)}$ of $M(\lambda)$ such that
$V(\lambda):=M(\lambda)/\overline{M(\lambda)}$ is irreducible. Otherwise,
$\lambda(\alpha)\in \mathbb{R}\backslash\mathbb{N}$ and $M(\lambda)$ is irreducible.
\begin{rem}
Let $M(\lambda_{0}),...,M(\lambda_{n-1})$ be $n$ Verma modules of $\mathfrak{g}_{1}.$
Then $M(\lambda):=M(\lambda_{0})\otimes M(\lambda_{1}\circ\alpha^{-1}) \otimes...\otimes M(\lambda_{n-1}\circ\alpha^{-(n-1)})$ is a Verma module of
$\mathfrak{g}=\mathfrak{g}_{1}\oplus\alpha(\mathfrak{g}_{1})\oplus...\oplus \alpha^{n-1}(\mathfrak{g}_{1})$ of highest weight $\lambda=(\lambda_{0},\lambda_{1}\circ\alpha^{-1},...,\lambda_{n-1}\circ\alpha^{-(n-1)})$.\\
Let $(M(\lambda),\r)$ the corresponding representation of $\mathfrak{g}$.
For $\b\in GL(M(\lambda))$, we have $(M(\lambda),\r_{\b},\b)$ is a weak weight module
of $(\mathfrak{g},[\cdot,\cdot]_{\alpha},\alpha)$ where $\r_{\b}=\b\circ\r.$
\end{rem}

\section{Applications}
The simple Lie algebra  $\mathfrak{sl}(2)$ is the
smallest simple Lie algebra which plays a distinguish  role in Lie theory. In this section, we provide examples and study representations of simple Hom-Lie algebras of $\mathfrak{sl}(2)$-type.
\subsection{Representations of the $\mathfrak{sl}(2)$-type Hom-Lie algebras}
We consider the usual Lie algebra  $\mathfrak{sl}(2)$  generated by $\{ e,f,h \}$ and defined by the brackets $[h,e]=2 e, \; [h,f]=-2f, \;
[e,f]=h$. We  call  $\mathfrak{sl}(2)$-type Hom-Lie algebras or Hom-$\mathfrak{sl}(2)$, the Hom-Lie algebras  obtained by  applying  the Yau twist to $\mathfrak{sl}(2)$. The twisted algebras are obtained along algebra morphisms which are automorphisms in the case  of $\mathfrak{sl}(2)$. They are determined in \cite{N.J},
see also \cite{O.E}. 
In this section we will study in details  representations of  $\mathfrak{sl}(2)$-type Hom-Lie algebras.
The Hom-$\mathfrak{sl}(2)$-modules will be constructed using Theorem \ref{Keythm}.
 One needs first to consider the set of all morphisms
on $\mathfrak{sl}(2)$ and then seek for twistings of $\mathfrak{sl}(2)$-modules.


\begin{lem}Every diagonal Twist of $\mathfrak{sl}(2)$ is given by
a morphism $\alpha$ defined with respect to the basis  $\{e,f,h\}$ by 
$ \alpha(e)=\lambda e, \; \; \alpha(f)=\lambda ^{-1}f, \; \; \alpha(h)=h,$ where $\lambda $ is a nonzero parameter  in $\mathbb{K}$.\\
Let $(V,\r_{\b},\b)$ be a representation of Hom-$\mathfrak{sl}(2)=(\mathfrak{sl}(2), [\cdot,\cdot]_{\alpha}=\alpha ([\cdot,\cdot]),\alpha)$ where $V$ is an $(n+1)$-dimensional
vector space with basis $\{v_0,\ldots, v_n\}$.   Then
$\b(v_{i})=\lambda ^{-i}b_0v_{i}$,  $ 0 \leq i\leq n$, and  $b_0\in \mathbb{K}$.
\end{lem}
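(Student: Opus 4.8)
The plan is to dispatch the two assertions separately, since the first is a short computation fixing the form of $\alpha$, while the second is an intertwining argument fixing the form of $\beta$.

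For the \textbf{form of the twist}, I would write a diagonal morphism as $\alpha(e)=ae$, $\alpha(f)=bf$, $\alpha(h)=ch$ with $a,b,c\in\mathbb{K}$, and impose that $\alpha$ respect the three defining brackets of $\mathfrak{sl}(2)$, as required by the Yau twist construction (Proposition~\ref{twist}). Comparing $\alpha([h,e])=[\alpha(h),\alpha(e)]$ gives $2a\,e = 2ca\,e$, hence $c=1$; the bracket $[h,f]$ yields the same; and $\alpha([e,f])=[\alpha(e),\alpha(f)]$ gives $c\,h = ab\,h$, hence $ab=1$. Since $\alpha$ is an automorphism we have $a\neq 0$, so putting $\lambda=a$ yields $\alpha(e)=\lambda e$, $\alpha(f)=\lambda^{-1}f$, $\alpha(h)=h$.

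For the \textbf{form of $\beta$}, I would use that Hom-$\mathfrak{sl}(2)$ is of Lie-type with induced Lie algebra $\mathfrak{sl}(2)$, so that by Theorem~\ref{Keythm} the map $\rho:=\beta^{-1}\circ\rho_\beta$ is a representation of $\mathfrak{sl}(2)$ (here $\beta$ is invertible, e.g. by the Corollary in Section~4 when the module is irreducible), and $\beta$ obeys the intertwining relation $\beta\circ\rho(x)=\rho(\alpha(x))\circ\beta$ for all $x$, i.e. condition~(\ref{char}). Taking $(V,\rho)$ to be the standard irreducible $(n+1)$-dimensional module in the weight basis $\{v_0,\ldots,v_n\}$, one has $\rho(h)v_i=(n-2i)v_i$ and $\rho(e)v_i=c_iv_{i-1}$ with $c_i\neq 0$ for $1\le i\le n$. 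Applying the intertwining relation to $x=h$, where $\alpha(h)=h$, shows $\beta$ commutes with $\rho(h)$; since the eigenvalues $n-2i$ are pairwise distinct in characteristic $0$, each line $\mathbb{K}v_i$ is $\beta$-stable, so $\beta(v_i)=\mu_i v_i$ for scalars $\mu_i$. Applying it next to $x=e$, where $\alpha(e)=\lambda e$, gives $\beta\circ\rho(e)=\lambda\,\rho(e)\circ\beta$; evaluating on $v_i$ and cancelling the nonzero $c_i$ produces the recursion $\mu_{i-1}=\lambda\mu_i$, that is $\mu_i=\lambda^{-1}\mu_{i-1}$. Iterating from $i=0$ and setting $b_0:=\mu_0$ gives $\beta(v_i)=\lambda^{-i}b_0\,v_i$, as claimed.

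The part requiring the most care is this second one: the diagonalization of $\beta$ hinges on $\rho(h)$ having a \emph{simple} spectrum, and the propagation of the single scalar $b_0$ across all indices hinges on $\rho(e)$ linking consecutive weight vectors with nonzero coefficients. Both are features of the irreducible weight module, so I would make explicit at the outset that $V$ is the irreducible $(n+1)$-dimensional $\mathfrak{sl}(2)$-module in its standard basis and that $\beta\in GL(V)$, so that Theorem~\ref{Keythm} legitimately applies and the chain of identities $\mu_i=\lambda^{-i}\mu_0$ closes up.
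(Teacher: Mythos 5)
Your proposal is correct, and for the second (main) assertion it takes a genuinely different route from the paper. The paper works with the standard $(n+1)$-dimensional $\mathfrak{sl}(2)$-module, writes $\beta(v_i)=\sum_j a_{ij}v_j$ as an arbitrary matrix, and imposes the intertwining condition \eqref{char} successively for $e$ and $f$; this produces a cascade of linear systems indexed by $i$ from which the off-diagonal entries are killed step by step (the paper even leaves some of these eliminations to the reader, e.g.\ ``one may show that $a_{30}$ and $a_{31}$ also vanish, and so one''), and finally the diagonal recursion $a_{ii}=\lambda^{-1}a_{i-1,i-1}$ is extracted. You instead observe that $\alpha(h)=h$ forces $\beta$ to commute with $\rho(h)$, whose spectrum $\{n-2i\}$ is simple in characteristic $0$, so $\beta$ is diagonal at once; the single relation $\beta\circ\rho(e)=\lambda\,\rho(e)\circ\beta$ applied to $v_i$ then gives $\mu_{i-1}=\lambda\mu_i$ and hence $\mu_i=\lambda^{-i}b_0$. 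This is shorter, avoids the incomplete case analysis of the paper, and makes transparent exactly which structural features are used (simple $h$-spectrum, nonvanishing of the $e$-coefficients on consecutive weight vectors). Two small points of care, which you already flag: the statement implicitly assumes $V$ carries the standard irreducible weight-basis action (the paper fixes this $\rho$ explicitly before computing $\beta$, so your explicit hypothesis matches its actual setting), and your appeal to Theorem~\ref{Keythm} to pass to $\rho=\beta^{-1}\circ\rho_\beta$ presupposes $\beta$ invertible; the paper sidesteps this by going in the opposite direction (starting from $\rho$ and solving \eqref{char} for $\beta$, which also admits $b_0=0$, consistent with the lemma's ``$b_0\in\mathbb{K}$''). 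Your first part (determining $\alpha$ from the three bracket relations) is exactly the ``straightforward calculation'' the paper asserts without detail.
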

\begin{proof}
Straightforward calculations show that  the  $\mathfrak{sl}(2)$-type Hom-Lie algebras with diagonal twist  are  given by morphisms $\alpha$ of the form 
$ \alpha(e)=\lambda e, \; \; \alpha(f)=\lambda ^{-1}f, \; \; \alpha(h)=h,$ where $\lambda $ is a parameter different from $0$ in $\mathbb{K}$. Therefore Hom-$\mathfrak{sl}(2)$ is equipped with the following
 bracket, $$[h,e]_{\alpha}=2\lambda e,\; \; [h,f]_{\alpha}=-2\lambda  ^{-1}f,\; \;
[e,f]_{\alpha}=h,\;  \text{ where } \lambda \neq 0,1.$$
Let $\rho$ be a  representation of $\mathfrak{sl}(2,\mathbb{C})$ on an
$(n+1)$-dimensional vector space $V$ generated by $\{ v_{0},...,v_{n} \}$. It  is defined as follows\\
$\left \{
\begin{array}{llll}
\rho(e)v_{i}=(n-i+1)v_{i-1},\; \forall\; i=1,...,n,\ \rho(e)v_{0}=0,\\
\rho(f)v_{i}=(i+1)v_{i+1},\; \;   \forall\; i=0,...,n-1, \  \rho(f)v_{n}=0,\\
\rho(h)v_{i}=(n-2i)v_{i},\; \;  \forall\; i=0,...,n.\\
\end{array}
\right.$\\
 
In the following, we twist  the previous representation with respect to Hom-$\mathfrak{sl}(2)$.\\  Let $\b\in End(V)$ and set $\b(v_{i})=\displaystyle\sum_{j=0}^{n}a_{ij}v_{j}$. We construct  maps $\b$  that satisfy Condition \eqref{char}.\\ 
First, we apply $\rho$ to  $e$, then in the  LHS we get
$\r(\alpha(e))\b(v_{i})=\lambda\displaystyle\sum_{j=1}^{n}(n-j+1)a_{ij}v_{j-1}$ and 
on the RHS we get 
 $\b(\r(e)v_{i})=(n-i+1)\displaystyle\sum_{j=0}^{n}a_{i-1,j}v_{j}$.\\
For $j=n$, the LHS vanishes; $(n-i+1)a_{i-1,n}=0$ and so we get
\begin{equation}\label{a1n}
    a_{0,n}=....=a_{n-1,n}=0.
\end{equation} Considering the equality, one has 
$(n-i+1)\displaystyle\sum_{j=0}^{n-1}a_{i-1,j}v_{j}=\lambda\displaystyle\sum_{j=1}^{n}(n-j+1)
a_{ij}v_{j-1}$. \\Rewriting the equality we get
\begin{align*}
(n-i+1)(a_{i-1,0}v_{0}+a_{i-1,1}v_{1}+...+a_{i-1,i-1}v_{i-1}+...+a_{i-1,n-1}v_{n-1}
)\\ =\lambda(na_{i1}v_{0}+(n-1)a_{i1}v_{1}+...+(n-(i-1))a_{ii}v_{i-1}+...+
a_{in}v_{n-1}).
\end{align*}
 More precisely,  we have
\begin{align*}(n-i+1)a_{i-1,0}v_{0}+(n-i+1)a_{i-1,1}v_{1}+...+(n-i+1)a_{i-1,i-1}v_{i-1}+...
+(n-i+1)a_{i-1,n-1}v_{n-1}
\\ =\lambda na_{i1}v_{0}+\lambda (n-1)a_{i1}v_{1}+...+\lambda (n-(i-1))a_{ii}v_{i-1}+...+
\lambda a_{in}v_{n-1}.
\end{align*}
By identification we get the following system of $n$ equations\\
$\left \{
\begin{array}{llll}
(n-i+1)a_{i-1,0}=\lambda n a_{i1}\\
(n-i+1)a_{i-1,1}=\lambda (n-1) a_{i2}\\
(n-i+1)a_{i-1,2}=\lambda(n-2)a_{i3}\\
\vdots\\
(n-i+1)a_{i-1,i-1}=\lambda(n-i+1)a_{ii}\\
(n-i+1)a_{i i}=\lambda(n-i)a_{i+1,i+1}\\
\vdots\\
(n-i+1)a_{i-1,n-1}=\lambda a_{in}.
\end{array}
\right.$\\

Setting $i=1$,  we get $a_{00}=\lambda a_{11}$ and $a_{12}=...=a_{1n}$ and from \eqref{a1n} we get
\begin{equation}\label{equa1}
a_{12}=...=a_{1n}=0
\end{equation}
For  $i=2$ we have 
$\left \{
\begin{array}{llll}
(n-1)a_{10}=\lambda n a_{21}\\
(n-1)a_{11}=\lambda(n-1)a_{22}\\
(n-1)a_{12}=\lambda(n-2)a_{23}\\
\vdots\\
(n-1)a_{1,n-1}=\lambda a_{2n}
\end{array}
\right.$

Using  \eqref{a1n} and \eqref{equa1} we obtain
\begin{equation}\label{equa2}
a_{21}=a_{23}=...=a_{2,n-1}=0
\end{equation}
It remains to verify that
$a_{20}=0$.\\
For  $i=3$ we have  $\left \{
\begin{array}{llll}
(n-2)a_{20}=\lambda n a_{31}\\
(n-2)a_{21}=\lambda(n-1)a_{32}\\
a_{22}=\lambda.a_{33}\\
\vdots\\
(n-2)a_{2,n-1}=\lambda a_{3n}
\end{array}
\right.$\\
Similarly,   \eqref{equa2} leads to $a_{34}=...=a_{3n}=0$.
One may show that  $a_{30}$ and $a_{31}$ also vanish, and so one.

The case $i=n$ leads to    $a_{n-1,0}=\lambda n a_{n1}, \; 
a_{n-1,1}=\lambda(n-1)a_{n2}, 
\cdots, 
a_{n-1,n-1}=\lambda  a_{nn}.$  To check that coefficients $a_{n1}=a_{n2}=...=a_{n,n-1}=0$, we use the identity \eqref{char} with generator $f$. We have 
$\r(\alpha(f))\b(v_{i})=\lambda ^{-1}\displaystyle\sum_{j=0}^{n-1}a_{ij}(j+1)v_{j+1}$ and 
$\b(\r(f)v_{i})=(i+1)\displaystyle \sum_{j=0}^{n}a_{i+1,j}v_{j}.$
Writing the equality, we obtain  
\begin{align*}
\lambda^{-1}a_{i0}v_{1}+2\lambda^{-1}a_{i1}v_{2}+3\lambda^{-1}a_{i2}v_{3}+...+n\lambda^{-1}a_{i,n-1}v_{n}
=(i+1)a_{i+1,0}v_{0}+...
+(i+1)a_{i+1,n}v_{n}.
\end{align*}
Then we get the following system of $n$ equations

$\left \{
\begin{array}{llll}
a_{i+1,0}=0, \; a_{i+1,1}=0\\
2\lambda^{-1}a_{i1}v_{2}=(i+1)a_{i+1,2}v_{2}\\
\vdots\\
n\lambda^{-1}a_{i,n-1}v_{n}=(i+1)a_{i+1,n-1}v_{n}
\end{array}
\right.$\\

Solving the system, we obtain a diagonal matrix where 
$$\lambda^{-1} a_{ii}=a_{i+1,i+1}, \forall 1\leq i\leq n.$$

Setting $a_{00}=b_0$, we have $a_{ii}=\lambda^{-i}b_{0}$ for $i=1,\ldots,n$.
\end{proof}
We proved that $\beta$ is given by a diagonal matrix that  for all $v_{i}\in V$, $\b(v_{i})=\lambda^{-i}b_{0}v_{i}$, where $b_{0}$ is a nonzero scalar.
In the following, we characterize the action with respect to $\b$, see also  \cite{XL}.
\begin{thm}\label{keyclassification}
The representations $(V,\r_{\b},\b)$ of $\mathfrak{sl}(2)$-type Hom-Lie algebras on a
vector space $V$ equipped with a basis $\{ v_{0},...,v_{n} \} $ are given  by $\b$ such that $\b(v_{i})=\lambda ^{-i}b_{0}v_{i}, b_{0}\neq 0$ and   the  
actions defined by
\begin{eqnarray*}
 &&\r_{\b}(e)v_{i}=(n-i+1)\lambda^{-i+1}b_{0}v_{i-1},\;i=1,...,n.\\
 && \r_{\b}(f)v_{i}=(i+1)\lambda^{-i-1}b_{0}v_{i+1},\;i=0,...,n-1.\\
&& \r_{\b}(h)v_{i}=(2i-n)\lambda^{-i}b_{0}v_{i},\;i=0,...,n.
\end{eqnarray*}
\end{thm}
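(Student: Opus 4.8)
The plan is to combine the previous lemma (which already establishes the diagonal form of $\b$) with Theorem~\ref{Keythm}(2), namely the twisting relation $\r_{\b}=\b\circ\r$. Since the lemma has fixed $\b(v_i)=\lambda^{-i}b_0 v_i$, the remaining task is a direct computation: simply apply $\b$ to the known $\mathfrak{sl}(2)$-action $\r$ on each basis vector and read off the coefficients.

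First I would recall that by the definition of a Lie-type representation and Theorem~\ref{Keythm}(2), once we have a representation $\r$ of the induced Lie algebra $\mathfrak{sl}(2)$ together with a $\b\in GL(V)$ satisfying Condition~\eqref{char}, the triple $(V,\r_\b=\b\circ\r,\b)$ is a representation of the Hom-Lie algebra. The preceding lemma produced precisely such a $\b$, diagonal with $\b(v_i)=\lambda^{-i}b_0 v_i$. So I would invoke this to legitimize setting $\r_\b=\b\circ\r$, and then evaluate.

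Next I would carry out the three evaluations in order, using the explicit formulas for $\r(e),\r(f),\r(h)$ displayed before the lemma. For $e$: $\r_\b(e)v_i=\b(\r(e)v_i)=\b\big((n-i+1)v_{i-1}\big)=(n-i+1)\lambda^{-(i-1)}b_0 v_{i-1}$, which is exactly the claimed $(n-i+1)\lambda^{-i+1}b_0 v_{i-1}$. For $f$: $\r_\b(f)v_i=\b\big((i+1)v_{i+1}\big)=(i+1)\lambda^{-(i+1)}b_0 v_{i+1}$, matching the statement. For $h$: $\r_\b(h)v_i=\b\big((n-2i)v_i\big)=(n-2i)\lambda^{-i}b_0 v_i$; here I would note that the stated coefficient $(2i-n)$ appears to be a sign discrepancy with the $(n-2i)$ convention fixed in the lemma, so I would either adopt the sign convention consistently or flag it, since it is the only nontrivial bookkeeping point.

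There is no serious obstacle: the hard conceptual work was already done in establishing the diagonal form of $\b$ and in Theorem~\ref{Keythm}. The only thing to be careful about is the index shift under $\b$ (applying $\b$ \emph{after} $\r$ raises or lowers the index by one, so the exponent of $\lambda$ is that of the \emph{output} vector, not the input $v_i$), and the sign convention for the $h$-action. I would present the computation as three short one-line verifications and conclude that these formulas exhaust all Hom-$\mathfrak{sl}(2)$ representations of this dimension arising from the diagonal twist.
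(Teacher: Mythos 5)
Your proposal is correct and follows essentially the same route as the paper, whose entire proof is the single line that the result is straightforward from Theorem \ref{Keythm}; you simply make explicit the evaluation of $\r_{\b}=\b\circ\r$ on each basis vector, which is exactly the intended argument. Your remark that the stated coefficient $(2i-n)$ for the $h$-action disagrees in sign with the convention $\r(h)v_{i}=(n-2i)v_{i}$ fixed before the lemma identifies a real inconsistency in the paper's own statement, so flagging it is appropriate.
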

\begin{proof} It is straightforward using Theorem \ref{Keythm}.\end{proof}
From the foregoing theorem we can classify the irreducible $\mathfrak{sl}(2)$-weight modules
as follows.
\begin{thm}
There are precisely four types of irreducible Hom-$\mathfrak{sl}(2)$-modules. The actions are described in the following. 

\begin{enumerate}
  \item The finite-dimensional irreducible modules with basis $\{v_{0},v_{1},...,v_{n}\}$ and where\\
$\begin{array}{llll}
h.v_{i}=(2i-n)\lambda^{-i}b_{0}v_{i}, \; 0\leq i\leq n,\\
e.v_{i}=\lambda^{-i-1}b_{0}v_{i+1}, \; 0\leq i\leq n, \; \; e.v_{n}=0,\\
f.v_{i}=i\lambda^{-i+1}b_{0}(n+1-i)v_{i-1}, \;  0<i\leq n, \; \;  f.v_{0}=0,\\
\text{ with } \b(v_{i})=\lambda^{-i}b_{0}v_{i}.
\end{array}$
  \item The irreducible infinite-dimensional lowest weight Hom-$\mathfrak{sl}(2)$-modules,
with a basis of $h$-eigenvectors $\{v_{0},v_{1},...\}$ and   nonnegative integer 
$\tau$, such that \\
$\begin{array}{llll}
h.v_{i}=\lambda^{-i}b_{0}(\tau+2i)v_{i},\;  i\geq0,\\
e.v_{i}=\lambda^{-i-1}b_{0}v_{i+1},\;  i\geq0,\\
f.v_{i}=-i\lambda^{-i+1}b_{0}(\tau+i-1)v_{i-1}, \; i>0, \; \;  f.v_{0}=0,\\
\text{ with } \b(v_{i})=\lambda^{-i}b_{0}v_{i}.
\end{array}$
  \item The irreducible infinite-dimensional highest weight Hom-$\mathfrak{sl}(2)$-modules,
with a basis of $h$-eigenvectors $\{v_{0}, v_{1},...\}$ and
 $\tau\in \mathbb{Z}\cap]-\infty,0[$, such that \\
$\begin{array}{llll}
h.v_{i}=\lambda ^{-i}b_{0}(\tau-2i)v_{i},\;  i\geq0,\\
f.v_{i}=\lambda ^{-i-1}b_{0}v_{i+1},\;  i\geq0,\\
e.v_{i}=i\lambda ^{-i+1}b_{0}(\tau-i+1)v_{i-1}, \;  i>0, \; \;  e.v_{0}=0,\\
\text{ with } \b(v_{i})=\lambda^{-i}b_{0}v_{i}.
\end{array}$
  \item The irreducible infinite-dimensional Hom-$\mathfrak{sl}(2)$-module (the Hom-module of intermediate series)
   with a basis
  $\{...,v_{-2},v_{-1},v_{0},v_{1},v_{2},...\}$ such that\\
$\begin{array}{llll}
h.v_{i}=\lambda^{-i}b_{0}(\tau+2i)v_{i},\;  i\in \mathbb{Z}.\\
e.v_{i}=\lambda^{-i-1}b_0v_{i+1}, \text{ if }\; i\geq 0, f.v_{i}=\lambda ^{-i+1}b_{0}v_{i-1},\;  if\;i\leq0,\\
e.v_{i}=\frac{1}{4}\Big(\mu-(\tau+2i+1)^{2}+1\Big)\lambda ^{-i-1}b_{0}v_{i+1}, \;  \text{ if }\;i<0,\\
f.v_{i}=\frac{1}{4}\Big(\mu-(\tau+2i-1)^{2}+1\Big)\lambda ^{-i+1}b_{0}v_{i-1},\text{ if }\;i>0,\\
\text{ with } \b(v_{i})=\lambda^{-i}b_{0}v_{i}\quad \text{ and } \tau \neq \sqrt{\mu +1}, \; \mu,\tau\in \mathbb{Z}.
\end{array}$
\end{enumerate}
\end{thm}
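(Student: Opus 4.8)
The plan is to classify all irreducible weight modules of the Hom-$\mathfrak{sl}(2)$ algebra by transporting the classical classification of irreducible weight modules of $\mathfrak{sl}(2)$ through the correspondence established in Theorem~\ref{Keythm}. By Theorem~\ref{keyclassification}, every representation $(V,\r_{\b},\b)$ of Hom-$\mathfrak{sl}(2)$ with diagonal twist arises from a genuine $\mathfrak{sl}(2)$-representation $(V,\r)$ via $\r_{\b}=\b\circ\r$, where $\b(v_i)=\lambda^{-i}b_0 v_i$. Since $\b$ is invertible and diagonal in the weight basis, the submodule lattices of $(V,\r)$ and $(V,\r_{\b},\b)$ coincide (the argument being exactly that of Proposition~4.8, applied in both directions). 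Hence $(V,\r_{\b},\b)$ is irreducible as a Hom-module if and only if $(V,\r)$ is irreducible as an $\mathfrak{sl}(2)$-module, and so it suffices to list the irreducible weight $\mathfrak{sl}(2)$-modules and apply the twist $\r\mapsto \b\circ\r$ explicitly.

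First I would recall the classical result: the irreducible weight modules of $\mathfrak{sl}(2,\mathbb{C})$ fall into exactly four families --- the finite-dimensional modules $V(n)$, the infinite-dimensional lowest weight (Verma-type) modules, the infinite-dimensional highest weight modules, and the modules of the intermediate series (dense modules with no highest or lowest weight, parametrized by a pair $(\tau,\mu)$). For each family I would write down the standard $\mathfrak{sl}(2)$-action of $e$, $f$, $h$ on the given weight basis, reindexing so that $h.v_i$ has eigenvalue matching the $\tau+2i$ (or $\tau-2i$) convention used in the statement, and verifying that the stated integrality or non-integrality conditions on $\tau$ (and the constraint $\tau\neq\sqrt{\mu+1}$ in the intermediate case) are exactly the conditions guaranteeing irreducibility of the underlying $\mathfrak{sl}(2)$-module.

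Next, for each of the four families, I would apply the twist. Using $\r_{\b}(x)=\b\circ\r(x)$ and $\b(v_i)=\lambda^{-i}b_0 v_i$, the action of $e$ sends $v_i$ to a multiple of $v_{i+1}$ in the lowest-weight and intermediate cases, picking up the factor $\lambda^{-(i+1)}b_0$, while $f$ picks up $\lambda^{-(i-1)}b_0=\lambda^{-i+1}b_0$, and $h$ is rescaled by $\lambda^{-i}b_0$ on $v_i$; this is a direct substitution identical in spirit to the computation in Theorem~\ref{keyclassification}. Reading off the resulting coefficients yields precisely the four displayed sets of formulas. The only point requiring care is bookkeeping of the structure constants of the original $\mathfrak{sl}(2)$-action in each family so that the twisted coefficients match the stated expressions (for instance the factor $\frac{1}{4}(\mu-(\tau+2i\pm1)^2+1)$ in the intermediate series, which comes from the Casimir eigenvalue of the dense module and is simply multiplied by the twist factor $\lambda^{\mp i\mp1}b_0$).

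The main obstacle is not any single hard estimate but rather assembling the complete and non-redundant list from the classical theory and confirming that the twist operation preserves irreducibility and induces a bijection between the two classifications --- in other words, checking that no irreducible Hom-module is missed and that two Hom-modules are isomorphic precisely when their underlying $\mathfrak{sl}(2)$-modules are. I would address this by invoking the submodule-lattice identification from the previous section together with Theorem~\ref{Keythm}, which together reduce the ``completeness'' claim to the known completeness of the four-family classification for $\mathfrak{sl}(2)$; the irreducibility conditions on $\tau$ and $\mu$ then transfer verbatim, since $\b$ being a scalar on each weight line does not alter which subspaces are invariant.
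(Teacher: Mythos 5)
Your proposal is correct and follows essentially the same route as the paper, which states this theorem without a written proof as a direct consequence of Theorem~\ref{keyclassification}: transport the classical four-family classification of irreducible weight $\mathfrak{sl}(2)$-modules through the twist $\rho_{\beta}=\beta\circ\rho$ with $\beta(v_i)=\lambda^{-i}b_0v_i$. Your added care in checking that $\beta$, being scalar on each (one-dimensional) weight line, preserves the submodule lattice in both directions is exactly the detail needed to justify the ``precisely four types'' claim, and it fills in what the paper leaves implicit.
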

\subsection{General method to compute Hom-$\mathfrak{sl}(2)$-modules}
Let $\rho_{\b}: \mathfrak{sl}(2)\rightarrow End(V)$ be the representation
of the Hom-$\mathfrak{sl}(2)$ and
 consider the action of the generators $\{e, f, h\}$ as follows:
 
\hspace{2cm}$ 
\rho_{\b}(e)v_{i}=\mu_{i}v_{i-1},\;
\rho_{\b}(f)v_{i}=\gamma_{i}v_{i+1}, \;
\rho_{\b}(h)v_{i}=\nu_{i}v_{i}, \text{ with } 
 \b(v_{i})=\eta_{i}v_{i}.$

The next step is to calculate the parameters $\nu_{i}, \gamma_{i},
\mu_{i},\;and\;\eta_{i}$ so that $\rho_{\b}$  be a 
Hom-$\mathfrak{sl}(2)$-module  on a $(n+1)$-dimensional vector space $V$. Straightforward  computation using Definition \ref{sheng} gives for each $i=0,...,n$ the following 5 equations\\
\begin{center}
$\left \{
\begin{array}{ll}
\nu_{i}\eta_{i}=\lambda \gamma_{i}\mu_{i+1}-\lambda ^{-1}\mu_{i}\gamma_{i-1}\; (1)\\
\mu_{i}\eta_{i}=\frac{1}{2\lambda }(\mu_{i}\nu_{i-1}-\lambda \nu_{i}\mu_{i}) \; \; \;\; (2)\\
\gamma_{i}\eta_{i}=-\frac{\lambda }{2}(\gamma_{i}\nu_{i+1}-\lambda ^{-1}\nu_{i}\gamma_{i}) \;
(3)
\end{array}\right.$  $
\left \{
\begin{array}{ll}
\eta_{i-1}\mu_{i}=\lambda \eta_{i}\mu_{i}\; \; \; \;(4)\\
\eta_{i+1}\gamma_{i}=\lambda ^{-1}\eta_{i}\gamma_{i}\; \; (5)
\end{array}
\right.$\\
\end{center}
Conditions $(4)$ and $(5)$ lead to  $\eta_{i}=(\lambda ^{-1})^{i}\eta_{0}, i\not=0$. Then\\
$(1)\Rightarrow
a\gamma_{i}\mu_{i+1}-\lambda ^{-1}\mu_{i}\gamma_{i-1}=\nu_{i}(\lambda ^{-1})^{i}\eta_{0}$.\\
$(2)\Rightarrow \frac{1}{2\lambda }\nu_{i-1}-\frac{1}{2}\nu_{i}=(\lambda ^{-1})^{i}\eta_{0}.$ \\
$(3)\Rightarrow -\frac{\lambda }{2}\nu_{i+1}+\frac{1}{2}\nu_{i}=(\lambda ^{-1})^{i}\eta_{0}.$\\
It follows that
$-\frac{1}{2}\nu_{i+1}+\frac{1}{2\lambda }\nu_{i}=(\lambda ^{-1})^{i+1}\eta_{0}$. Then,
$\frac{1}{\lambda }\nu_{i-1}-\nu_{i}=2(\lambda ^{-1})^{i}\eta_{0}.$
Thus, $\nu_{i}=\frac{1}{\lambda ^{i}}(\nu_{0}-2i\eta_{0}).$
Now for $\mu_{i}\;and\;\gamma_{i}$ we get
\vspace{-1em}
\begin{eqnarray*}
\gamma_{i}\mu_{i+1}&=& \lambda ^{-2}\mu_{i}\gamma_{i-1}+\nu_{i}(\lambda ^{-1})^{i}\eta_{0}\\
&=&\lambda ^{-2}\Big(a^{-2}\mu_{i-1}\gamma_{i-2}+\nu_{i-1}(a^{-1})^{i-1}\eta_{0}\Big)+\nu_{i}
(a^{-1})^{i}\eta_{0}\\
&=&\lambda ^{-2\times2}\Big(\lambda ^{-2}\mu_{i-2}\gamma_{i-3}+\nu_{i-2}(\lambda ^{-1})^{i-2}\eta_{0}\Big)+
(\lambda ^{-1})^{2}\nu_{i-1}(\lambda ^{-1})^{i-\lambda }\eta_{0}+\nu_{i}(\lambda ^{-1})^{i}\eta_{0}\\
&=&\lambda ^{-2\times 3}\mu_{i-2}\gamma_{i-3}+\lambda ^{-2\times2}\nu_{i-2}(\lambda ^{-1})^{i-2}\eta_{0}+
(\lambda ^{-2})\nu_{i-1}(\lambda ^{-1})^{i-1}\eta_{0}+\nu_{i}(\lambda ^{-1})^{i}\eta_{0}\\
&=&\lambda ^{-2\times3}\mu_{i-2}\gamma_{i-3}+\nu_{i-2}(\lambda ^{-1})^{i+2}\eta_{0}
+\nu_{i-1}(\lambda ^{-1})^{i+1}\eta_{0}+\nu_{i}(\lambda ^{-1})^{i}\eta_{0}\\
\cdots\\
&=&\lambda ^{-2i}\mu_{1}\gamma_{0}+\eta_{0}\displaystyle\sum^{i-1}_{k=0}\nu_{i-k}(\lambda ^{-1})^{i+k}
\Big(\nu_{0}-2(i-k)\eta_{0}\Big)\\
&=&\lambda ^{-2i}\mu_{1}\gamma_{0}+\eta_{0}\displaystyle\sum^{i-1}_{k=0}\lambda ^{-2i}\Big(\nu_{0}-2i\eta_{0}
+2k\eta_{0}\Big)\\
&=&\lambda ^{-2i}\mu_{1}\gamma_{0}+\lambda ^{-2i}\eta_{0}\Big(\displaystyle\sum^{i-1}_{k=0}\nu_{0}-
\displaystyle\sum^{i-1}_{k=0}2i\eta_{0}+\displaystyle\sum^{i-1}_{k=0}2k\eta_{0}\Big)\\
&=&\lambda ^{-2i}\mu_{1}\gamma_{0}+\lambda ^{-2i}\eta_{0}\Big(i\nu_{0}-2i^{2}\eta_{0}+
\not2i\frac{i-1}{\not2}\eta_{0}\Big)\\
&=&\lambda ^{-2i}\Big(\mu_{1}\gamma_{0}+i\eta_{0}\Big(\nu_{0}-(i+1)\eta_{0}\Big)\Big).
\end{eqnarray*}
Therefore, we get the following connections characterizing the parameters such that $\rho_{\b}$  is a 
Hom-$\mathfrak{sl}(2)$-module  on a $(n+1)$-dimensional vector space $V$:
\begin{equation*}
\eta_{i}=(\lambda ^{-1})^{i}\eta_{0}, i\not=0\;  \text{ and } 
\gamma_{i}\mu_{i+1}=\lambda ^{-2i}\Big(\mu_{1}\gamma_{0}+i\eta_{0}
(\nu_{0}-(i+1)\eta_{0})\Big).
\end{equation*}

Setting  $\eta_{0}=b_{0}, \mu_{1}=1, \gamma_{0}=n$ leads to  the same result obtained  in Theorem \ref{keyclassification}.

\end{document}